\theoremstyle{definition}
\newtheorem{definition}{Definition}
\newtheorem{example}[definition]{Example}
\newtheorem{remark}[definition]{Remark}
\theoremstyle{plain}
\newtheorem{lemma}[definition]{Lemma}
\newtheorem{proposition}[definition]{Proposition}
\newtheorem{theorem}[definition]{Theorem}
\newtheorem{corollary}[definition]{Corollary}
\newcommand{\tpmod}[1]{{\@displayfalse\pmod{#1}}}
\newcommand\Ni[1][S]{\mathfrak{I}_{0}(#1)}
\newcommand{\Os}[1][S]{\mathcal{O}(#1)}
\newcommand{\Ap}{\operatorname{Ap}}
\newcommand{\Cl}{\mathcal{C}\!\ell}
\def\@bignumber#1#2{%
  \ifx#2\end
    #1\let\next\@gobble
  \else
    #1\hspace{0pt plus 1pt}\let\next\@bignumber
  \fi
  \next#2}
\newcommand{\bignumber}[1]{\@bignumber#1\end}
\title[Poset normalized ideals]{The poset of normalized ideals of numerical semigroups with multiplicity three}
\author{S. Bonzio}
\address{Departament of Mathematics and Computer Science, University of Cagliari, via Ospedale 72, 09124 Cagliari, Italy}
\email{stefano.bonzio@unica.it}
\author{P. A. García-Sánchez}
\address{Departamento de \'Algebra and IMAG, Universidad de Granada, E-18071 Granada, Espa\~na}
\email{pedro@ugr.es}
\keywords{ideal class monoid, numerical semigroup, poset, lattice}
\subjclass[2020]{20M14, 20M12}
\begin{document}

\begin{abstract}
    We study the poset of normalized ideals of a numerical semigroup with multiplicity three. We show that this poset is always a lattice, and that two different numerical semigroups with multiplicity three have non-isomorphic posets of normalized ideals.
\end{abstract}

\maketitle

\section{Introduction}

Let $S$ be a numerical semigroup, and let $I,J$ be two ideals of $S$. We write $I\sim J$ if there exists $z\in \mathbb{Z}$ (the set of integers) such that $I=z+J$. This binary relation is an equivalence relation. Let us denote by $[I]$ the equivalence class of the ideal $I$. We can define the sum of two ideal classes as $[I]+[J]=[I+J]$, where, as usual, $I+J$ denotes the setwise addition of $I$ and $J$ (or Minkowski sum). If we denote by $\Cl(S)$ the quotient of the set of ideals of $S$ modulo $\sim$, then $(\Cl(S),+)$ is a monoid, known as the ideal class monoid of $S$; its identity element is $[S]$, the equivalence class of $S$. The ideal class monoid of a numerical semigroup was introduced in \cite{b-k}, where some basic properties and its relation to antichains of gaps of the numerical semigroup were given.

It is well known that $(\Cl(S),+)$ is isomorphic to $(\Ni,+)$, where $\Ni$ denotes the set of ideals $I$ of $S$ such that $\min(I)=0$ (normalized ideals). The isomorphism is precisely $[I]\mapsto -\min(I)+I$. In \cite{apery-icm} the concept of Kunz coordinates of normalized ideals of a numerical semigroup were introduced, and it gave rise to new bounds for the cardinality of the ideal class monoid of a numerical semigroup, as well as some closed formulas for the intersection, union, and sum of ideals. 

In \cite{isom-icm}, it was shown that if $S$ and $T$ are semigroups such that $(\Cl(S),+)$ is isomorphic to $(\Cl(T),+)$, then $S$ and $T$ must be equal. So, one of the motivations to study the ideal class monoid, which was classifying numerical semigroups, was completed: the ideal class monoid of a numerical semigroup completely determines the numerical semigroup.

We can define on $\Ni$ the following relation: $I\preceq J$ if there exists $K\in \Ni$ with $I+K=J$. On $\Cl(S)$, this relation translates to the relation $[I]\preceq [J]$ if $[I]+[K]=[J]$ for some ideal $K$ of $S$. Clearly, the posets $(\Ni,\preceq)$ and $(\Cl(S),\preceq)$ are order isomorphic.

In \cite{apery-icm}, several properties and invariants of $S$ were derived from the shape of the Hasse diagram of the poset $(\Ni,\preceq)$, and the natural question of whether an order isomorphism between $(\Ni,\preceq)$ and $(\Ni[T],\preceq)$ (with $T$ another numerical semigroup), forces $S=T$ was proposed. In \cite{isom-icm}, it was proven that if the posets $(\Ni,\subseteq)$ and $(\Ni[T],\subseteq)$ are order isomorphic, then $S=T$. As we will see in Section~\ref{sec:inclusion-eq-preceq} in very few cases the order relations $\preceq$ and $\subseteq$ coincide.

The aim of this paper is to study in deep detail the poset $(\Ni,\preceq)$ in the case $S$ has multiplicity three, and give an affirmative answer to the poset isomorphism problem proposed in \cite[Question~6.2]{apery-icm}. To this end, we will see that $S$ is fully determined by the number of quarks in $(\Ni,\preceq)$ and their depths. We will extensively make use of the Kunz coordinates of the normalized ideals of a numerical semigroup. In doing so, we gain some knowledge on operations with ideals given by Kunz coordinates. Some auxiliary lemmas not restricted to multiplicity three are also presented.

\subsection*{Supplemental online material} Most of the results presented here took shape after observing many experiments carried out with the help of the \texttt{numericalsgps} \cite{numericalsgps} \texttt{GAP} \cite{gap} package (see \url{https://github.com/numerical-semigroups/ideal-class-monoid}). The computations related to the examples in this manuscript can be found in that repository.

\section{Preliminaries}

In this section, we recall some basic notions and results concerning numerical semigroups, ideals of numerical semigroups, posets and lattices.

\subsection{Numerical semigroups}
A \emph{numerical semigroup} is a co-finite submonoid of the monoid of non-negative integers under addition, denoted by $\mathbb{N}$ in this manuscript. The co-finite condition is equivalent to saying that the greatest common divisor of the elements of the semigroup is one. 
The least positive integer belonging to $S$ is known as the \emph{multiplicity} of $S$, denoted by $\operatorname{m}(S)$, that is, $\operatorname{m}(S)=\min(S^*)$, with $S^*=S\setminus\{0\}$.

Given $A\subseteq \mathbb{N}$, the smallest submonoid of $\mathbb{N}$ that contains $A$ is 
\[
\langle A\rangle = \left\{ \sum_{i=1}^n a_i : n\in \mathbb{N}, a_1,\dots,a_n\in A\right\}.
\]
Clearly, if $S$ is a numerical semigroup, then $\langle S\rangle =S$. If $A\subseteq S$ is such that $\langle A\rangle =S$, then we say that $A$ is a \emph{set of generators} of $S$, or simply that $A$ \emph{generates} $S$. We say that $A$ is a \emph{minimal set of generators} of $S$ if no proper subset of $A$ generates $S$. It is well known that $S$ admits a unique minimal set of generators $S^*\setminus (S^*+S^*)$ (see for instance \cite[Corollary~2]{ns-app}), moreover this set cannot have two elements congruent modulo $\operatorname{m}(S)$, and so the cardinality of the minimal set of generators, known as the embedding dimension of $S$, is always smaller than the multiplicity of $S$. The elements of $S^*\setminus (S^*+S^*)$ are known as minimal generators of $S$. It follows easily that $s\in S$ is a minimal generator if and only if $S\setminus \{s\}$ is a numerical semigroup.

For a numerical semigroup $S$, the elements in $\mathbb{N}\setminus S$ are called \emph{gaps} of $S$. The cardinality of $\mathbb{N}\setminus S$ is the \emph{genus} of $S$, denoted by $\operatorname{g}(S)$. The largest gap of a numerical semigroup $S$ different from $\mathbb{N}$ is known as the \emph{Frobenius number} of $S$, that is, $\operatorname{F}(S)=\max(\mathbb{Z}\setminus S)$.

Associated to a numerical semigroup $S$ one can define the order induced by $S$ as $a\le_S b$ if $b-a\in S$, for any $a,b\in\mathbb{Z}$. Minimal generators of $S$ are precisely the elements in $S^*$ that are minimal with respect to $\le_S$.

The set of maximal elements of $\mathbb{Z}\setminus S$ are known as the \emph{pseudo-Frobenius numbers} of $S$, and this set is denoted by $\operatorname{PF}(S)$. Thus, $f\in \mathbb{Z}$ is in $\operatorname{PF}(S)$ if and only if $f\not\in S$ and $f+s\in S$ for all $s\in S^*$. The cardinality of $\operatorname{PF}(S)$ is called the (Cohen-Macaulay) \emph{type} of $S$, and it is denoted by $\operatorname{t}(S)$.

A numerical semigroup $S$ is \emph{irreducible} if it cannot be expressed as the intersection of two numerical semigroups properly containing it. Every irreducible numerical semigroup is either symmetric (odd Frobenius number) or pseudo-symmetric (even Frobenius number). Recall that a numerical semigroup $S$ is \emph{symmetric} if for any $x\in \mathbb{Z}\setminus S$, $\operatorname{F}(S)-x\in S$, and $S$ is \emph{pseudo-symmetric} if $\operatorname{F}(S)$ is even and for any $x\in \mathbb{Z}\setminus S$, $x\neq \operatorname{F}(S)/2$, $\operatorname{F}(S)-x\in S$. A numerical semigroup is symmetric if and only if its type is one, and it is pseudo-symmetric if and only if $\operatorname{PF}(S)=\{\operatorname{F}(S)/2,\operatorname{F}(S)\}$ (see \cite[Chapter~3]{ns} for more details). 

Let $S$ be a numerical semigroup. A gap $g$ of $S$ is called a \emph{special gap} if $S\cup\{g\}$ is a numerical semigroup. The set of special gaps is denoted by $\operatorname{SG}(S)$. It is well known that $\operatorname{SG}(S)=\{ g\in \operatorname{PF}(S) : 2g\in S\}$ \cite[Proposition~4.33]{ns}.

\subsection{Ideals of numerical semigroups}

A (relative) \emph{ideal} of $S$ is a set $I$ of integers such that $I+S=I$ and $z+I\subseteq S$ for some integer $z$. This last condition is equivalent to the existence of $\min(I)$. An ideal $I$ is said to be \emph{normalized} if $\min(I)=0$.

The union of two ideals of a numerical semigroup is again an ideal, and the same holds for the intersection. Addition of ideals can be defined as follows. If $I$ and $J$ are ideals of $S$, then $I+J=\{i+j : i\in I, j\in J\}$ is also an ideal of $S$ (see \cite[Chapter~3]{ns-app} for the basic properties of ideals of numerical semigroups). The set of ideals of $S$ under this operation is a monoid, and its identity element is $S$.

If $I$ is an ideal of a numerical semigroup $S$, then $I+S=I$, and so $I$ can be expressed as $I=X+S=\{x+s : x \in X, s\in S\}$ for some subset $X$ of $I$. Such a set is known as a \emph{generating set} of $I$. Notice that $I=X+S$, with $X=\operatorname{Minimals}_{\le_S}(I)$, and that every generating set of $I$ must contain $X$. The set $\operatorname{Minimals}_{\le_S}(I)$ is known as the \emph{minimal generating set} of $S$, and its elements are called minimal generators of $I$. Observe that there cannot be two different minimal generators congruent modulo the multiplicity of $S$. In particular, a minimal generating set of an ideal $I$ of $S$ has at most cardinality $\operatorname{m}(S)$. 

If $X$ is a finite set of integers, then $X+S$ is an ideal of $S$. If $X=\{x\}$ for some $x\in \mathbb{Z}$, then we write $x+S$ instead of $\{x\}+S$.

Let $I\in \Ni$. Then, $x\in I\setminus\{0\}$ is a minimal generator of $I$ if and only if $I\setminus\{x\}\in \Ni$ (see \cite[Lemma~9]{isom-icm}).

\subsection{Posets and lattices}

A \emph{poset} $(P,\leq)$ is a set equipped with a (partial) order relation ($\leq$). Given two elements $a,b$ in a poset $(P,\leq)$, $b$ is a \emph{cover} of $a$ if $a < b$ (that is $a\leq b$ and $a\neq b$) and for every $c$ such that $a\leq c\leq b$, then either $c = a$ or $c = b$. 

A poset $(P,\leq)$ is a \emph{lattice} if, for every pair of elements $x,y\in P$ there exist the infimum and the supremum of $\{x,y\}$; in such a case they are (binary) operations on the set $P$ usually denoted by $x\wedge y$ and $x\vee y$, and referred to as \emph{meet} and \emph{join}, respectively. A lattice can be equivalently defined as a set equipped with two idempotent, associative, commutative and absorptive operations $\wedge $ and $\vee$. In a lattice $L$, meets and joins of an arbitrary subset $X\subseteq L$ do not necessarily exist; if they do for every subset $X\subseteq L$, then $(L,\wedge,\vee)$ is a \emph{complete} lattice. The supremum and infimum of an arbitrary set $X$ (if they exist) are sometimes indicated with $\bigwedge X$ and $\bigvee X$, respectively.
Obviously, every finite lattice is a complete lattice. 

A \emph{meet semilattice} $(S,\wedge)$ is a set $S$ with an associative, commutative and idempotent operation $\wedge$. Every meet semilattice induces a partial order relation defined as: $x\leq y$ if and only if $x\wedge y = x$. If the order $\leq$ has a maximum element, denoted $1$, then $(S,\wedge)$ is called a meet semilattice with one ($1$ is, equivalently, the neutral element for the operation $\wedge$). Similarly, $(S,\vee)$ is a \emph{join semilattice} if $\vee $ is an associative, commutative and idempotent operation on $S$. In this case, $(S,\vee)$ induces a partial order relation defined as $x\le y$ if  $x\vee y = y$, for all $x,y\in S$. For the case the induced order $\leq$ has a minimum element, which we denote by $0$, then $(S,\vee)$ is a join semilattice with zero ($0$ is the neutral element for $\vee$). In a join (meet, respectively) semilattice the element $x\vee y$ ($x\wedge y$, respectively) is the supremum (infimum, respectively) of the set $\{x,y\}$ with respect to the induced order $\leq$.  
Given a poset $(P,\leq)$ and $X\subseteq P$, we indicate by ${\uparrow}X$ and ${\downarrow}X$ the set of upper and lower bounds, respectively, of $X$, namely ${\uparrow}X =\{a\in P  :  x\leq a, \text{ for every } x\in X\}$ and ${\downarrow}X =\{a\in P  :  a\leq x, \text{ for every } x\in X\}$. If $X = \{x\}$, we will write ${\uparrow}x$ and ${\downarrow}x$ instead of ${\uparrow}\{x\}$ and ${\downarrow}\{x\}$. 

In a poset $(P,\leq)$ we will say that an element $x\in P$ has a \emph{unique cover} if the poset $({({\uparrow}x) \setminus\{x\}}, \leq)$ has a minimum; we will denote this minimum as $x^{c}$.

Every finite meet or join semilattice with one or zero, respectively, is indeed a lattice, as recalled in the following well known result in order theory. 
\begin{theorem}\cite[Theorem 2.4]{Nation}\label{th: Nation Th: 2.4}
Let $(S,\vee)$ be a finite join semilattice with zero. Then, $S$ is a lattice with the meet operation defined by 
\[
x\wedge y = \bigvee ({\downarrow} x \cap {\downarrow} y).
\]
\end{theorem}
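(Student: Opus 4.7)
The plan is to show that for every pair $x,y\in S$ the set $\{x,y\}$ admits an infimum in the induced order, and that this infimum is exactly $\bigvee({\downarrow}x\cap{\downarrow}y)$. Once meets for all pairs exist, $S$ is automatically a lattice (joins are given by hypothesis), so the heart of the argument is entirely concentrated in analysing this candidate for $x\wedge y$.

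First I would check that the expression is well defined. The set $D:={\downarrow}x\cap{\downarrow}y$ is nonempty, since $0\in D$ by minimality, and it is finite because $S$ is finite. Hence $D=\{d_1,\dots,d_n\}$ and the element $m:=d_1\vee\cdots\vee d_n$ exists by associativity of $\vee$; this is what $\bigvee D$ denotes, and it is independent of the ordering of the $d_i$ by commutativity and associativity.

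Next I would verify that $m$ is a lower bound of $\{x,y\}$. For each $i$ we have $d_i\le x$, equivalently $d_i\vee x=x$. Using associativity and commutativity of $\vee$ repeatedly, $m\vee x=(d_1\vee\cdots\vee d_n)\vee x=(d_1\vee x)\vee\cdots\vee(d_n\vee x)$ simplifies to $x$, so $m\le x$; symmetrically $m\le y$. Hence $m\in D$, and in fact $m=\max D$ with respect to $\le$. Finally, if $z\in S$ satisfies $z\le x$ and $z\le y$, then $z\in D$, so $z$ is one of the $d_i$ and therefore $z\le m$ by the universal property of the join. This establishes that $m$ is the greatest lower bound of $\{x,y\}$, so the meet exists and coincides with the stated formula.

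The only delicate point is the step $m\le x$, which relies on the semilattice identity that the join of finitely many elements each below $x$ is itself below $x$; this is a direct consequence of the absorption property $d_i\vee x=x$ combined with associativity, and is the only place where finiteness of $D$ is genuinely used. Everything else is bookkeeping with the induced order, so I do not expect a serious obstacle beyond being careful with that verification.
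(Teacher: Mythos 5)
Your argument is correct and is the standard proof of this classical fact (it is essentially the one in Nation's notes); the paper itself only cites the result and gives no proof, so there is nothing to compare against. The one identity you lean on, that a finite join of elements below $x$ is again below $x$, is cleanest by induction via $(a\vee b)\vee x=a\vee(b\vee x)=a\vee x=x$, and your use of the zero element to guarantee ${\downarrow}x\cap{\downarrow}y\neq\emptyset$ is exactly where that hypothesis is needed.
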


Recall that, in a poset $(P,\leq)$ two elements $x,y\in P$ are \emph{incomparable} (with respect to $\leq$) if $x\nleq y$ and $y\nleq x$.

\begin{lemma}\label{lemma: poset with unique cover}
Let $(P,\leq)$ be a poset and $x\in P$ an element having a unique cover $x^{c}$. Then: 
\begin{enumerate}
\item for any $y$ that is incomparable with $x$, it holds that ${\uparrow}\{x,y\} = {\uparrow}\{x^{c},y\}$; 
\item for any $y\nleq x$ and $y\leq x^{c}$, $x\vee y$ exists, in particular $x\vee y = x^{c}$.
\end{enumerate}
\end{lemma}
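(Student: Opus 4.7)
The plan is to prove both parts using only the defining property of a unique cover: $x^{c}$ is the minimum of $({\uparrow}x)\setminus\{x\}$. From this I extract the single key fact that will drive both arguments: for every $z\in P$, if $x\le z$ and $z\neq x$, then $x^{c}\le z$. Combined with the trivial observation $x\le x^{c}$ (since $x^{c}$ lies in ${\uparrow}x$), this reduces both claims to a short case distinction on whether a given upper bound of $\{x,y\}$ equals $x$.

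For part (1) I would show each inclusion separately. The inclusion ${\uparrow}\{x^{c},y\}\subseteq {\uparrow}\{x,y\}$ follows at once from $x\le x^{c}$: any element above both $x^{c}$ and $y$ is automatically above $x$ and $y$. For the reverse inclusion, take $z\in{\uparrow}\{x,y\}$. Then $x\le z$; if $z=x$, then $y\le x$, contradicting the assumption that $x$ and $y$ are incomparable. Hence $z\neq x$, and the key fact forces $x^{c}\le z$. Since $y\le z$ was part of the hypothesis on $z$, we conclude $z\in{\uparrow}\{x^{c},y\}$.

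For part (2), I first verify that $x^{c}$ is an upper bound of $\{x,y\}$: the hypothesis $y\le x^{c}$ is explicit, and $x\le x^{c}$ always holds. To see it is the least, let $z\in{\uparrow}\{x,y\}$; then $x\le z$, and the hypothesis $y\nleq x$ rules out $z=x$, for otherwise $y\le z=x$. Therefore $z\neq x$, and the key fact yields $x^{c}\le z$, which proves $x\vee y=x^{c}$.

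I do not foresee any substantive obstacle: both assertions are essentially unpacking the definition of unique cover. The only mild subtlety is that part (2) assumes merely $y\nleq x$ rather than full incomparability of $x$ and $y$; the edge case $x<y\le x^{c}$ is automatically harmless, since then $y\in({\uparrow}x)\setminus\{x\}$ forces $x^{c}\le y$ and thus $y=x^{c}$, so the conclusion $x\vee y=x^{c}$ holds trivially.
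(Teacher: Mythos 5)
Your proof is correct and follows essentially the same argument as the paper: both parts reduce to the observation that any upper bound $z$ of $\{x,y\}$ satisfies $z\neq x$ (by incomparability in (1), by $y\nleq x$ in (2)), hence lies in $({\uparrow}x)\setminus\{x\}$ and is above $x^{c}$ by minimality. The remark on the edge case $x<y\leq x^{c}$ is a nice extra but not needed.
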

\begin{proof}
(1) Let $x^{c}$ be the unique cover of the element $x\in P$ and let $y$ be incomparable with $x$. Suppose $a\in {\uparrow}\{x,y\}$, that is, $x\leq a$ and $y\leq a$. Observe that, since  $x$ and $y$ are incomparable, we deduce that $a\neq x$. Hence, $a\in ({\uparrow} x)\setminus \{x\}$, and so  $x^{c}\leq a$. 
Thus, $a\in {\uparrow}\{x^{c},y\}$, showing that ${\uparrow}\{x,y\} \subseteq {\uparrow}\{x^{c},y\}$. The other inclusion is obvious as $x < x^{c}$.

\noindent
(2) Let $y \in P$ such that $y\nleq x$ and $y\leq x^{c}$. Then, $x^{c}$ in an upper bound of $\{x, y\}$. Let $z$ be an upper bound of $\{x, y\}$, that is, $x\leq z$ and $y\leq z$. The assumption $y\nleq x$ forces $x\neq z$. Thus, $z\in {\uparrow}x\setminus \{x\}$, and so $x^{c}\leq z$. This shows that $x\vee y = x^{c}$.
%
\end{proof}

\section{Normalized ideals of numerical semigroups}

Recall that a relative ideal $I$ of a numerical semigroup $S$ is a normalized ideal $I$ if $\min(I) = 0$. The set $\mathcal{I}_{0}(S)$ of normalized ideals of $S$ is always finite and forms a (complete) lattice under the operations of $\cap$ and $\cup$ (see \cite{apery-icm}). Moreover, as mentioned in the introduction, it can be turned into a poset upon considering the partial order relation: $I\preceq J$ if there exists $L\in\mathcal{I}_{0}(S)$ such that $I + L = J$. Observe that $I\preceq J$ implies $I\subseteq J$. An ideal $I$ is a \emph{quark} if it is minimal with respect to $\preceq$ in $\Ni\setminus\{S\}$; in other words, a quark is a cover of $S$ in the poset $(\Ni,\preceq)$.

Let $m$ be the multiplicity of $S$. If $I$ is an ideal of $S$, and $x\in I$, then $x+ks\in I$ for every non-negative integer $k$ and $s\in S$. It follows that the set $\Ap(I)=\{ x\in I : x-m\not \in I\}$ generates $I$ as an ideal, and it has precisely $m$ elements, one per each congruence class modulo $m$. The set $\Ap(I)$ is known as the \emph{Apéry set} of $I$ (with respect to $m$). Thus, $\Ap(I)=\{w_0,w_1,\dots,w_{m-1}\}$, where $w_i=\min(I\cap (i+m\mathbb{N}))$.

Given $a,b,n\in \mathbb{Z}$, with $n\neq 0$, we denote by $a\bmod n$ the remainder of the (Euclidean) division of $a$ by $n$ ($a\bmod n\in \{0,\dots,n_1\}$), and we write $a\equiv b \pmod n$ if $n$ divides $b-a$.

As $I\subseteq \mathbb{N}$, we deduce that $\Ap(I)\subseteq \mathbb{N}$, and consequently for every $i\in\{0,\dots,m-1\}$, $w_i=m x_i +i$ for some non-negative integer $x_i$. The tuple $(x_1,\dots,x_{m-1})$ is known as the \emph{Kunz coordinates} of $I$ (see \cite[Section~4]{apery-icm}). Notice that we are omitting $x_0$, which is equal to $0$, as $0=\min(I)$. 

In the sequel, we will write $I=(x_1,\dots,x_{m-1})_{\mathcal{K}}$ to denote that $(x_1,\dots,x_{m-1})$ are the Kunz coordinates of $I$.

Let $n\in \mathbb{N}$, and let $i= n\bmod m$.
Then, $n=km+i$ for some $k\in\mathbb{N}$. We know that $w_i$ is the minimum element in $I$ congruent with $i$ modulo $m$. Hence, $n\in I$ if and only if $n\ge w_i$, or equivalently $k\ge x_i$.  Hence, for an integer $n=k m+i$,
\begin{equation}\label{eq:kunz-membership}
   km+i\in (x_1,\dots,x_{m-1})_{\mathbb{K}}  \text{ if and only if } x_i\le k.
\end{equation}
In particular, $n\not\in I$ if and only if $k\in \{0,\dots,x_i-1\}$, and this holds for every congruence class modulo $m$. Thus, the number of non-negative integers not belonging to $I$ is
\begin{equation}\label{eq:genus-ideal-kunz}
    |\mathbb{N}\setminus (x_1,\dots,x_{m-1})_{\mathcal{K}}| = x_1+\dots+x_{m-1}.
\end{equation}

With this in mind it is easy to show that if $J$ is an ideal with Kunz coordinates $(y_1,\dots,y_{m-1})$, then
\begin{equation}\label{eq:inclusion-kunz-coord}
    (x_1,\dots,x_{m-1})_{\mathcal{K}}\subseteq (y_1,\dots,y_{m-1})_{\mathcal{K}} \text{ if and only if } (y_1,\dots,y_{m-1})\le (x_1,\ldots,x_{m-1})
\end{equation}
with respect to the usual partial order on $\mathbb{N}^{m-1}$, and
\begin{align*}
    (x_1,\dots,x_{m-1})_{\mathcal{K}}\cap (y_1,\dots,y_{m-1})_{\mathcal{K}} & =(\max(\{x_1,y_1\}),\dots,\max(\{x_{m-1},y_{m-1}\}))_{\mathcal{K}},\\
    (x_1,\dots,x_{m-1})_{\mathcal{K}}\cup (y_1,\dots,y_{m-1})_{\mathcal{K}} & =(\min(\{x_1,y_1\}),\dots,\min(\{x_{m-1},y_{m-1}\}))_{\mathcal{K}}.
\end{align*}
Addition requires more effort, but can be derived by translating \cite[Proposition~4.8]{apery-icm} to Kunz coordinates. If $I+J=(z_1,\dots,z_{m-1})_{\mathcal{K}}$, then for every $i\in \{1,\dots,m-1\}$
\begin{equation}\label{eq:kunz-coordinates-sum-general}
    z_i=\min(\{x_{i_1}+y_{i_2}+\lfloor \tfrac{i_1+i_2}m \rfloor : i_1,i_2\in \{0,\dots,m-1\}, i_1+i_2\equiv i \tpmod{m}\}).
\end{equation}
where $\lfloor q\rfloor=\max\{ z\in \mathbb{Z} : z\le q\}$ for every $q\in \mathbb{Q}$.

The idempotent elements in $(\Ni,+)$ are precisely the oversemigroups of $S$ \cite[Proposition~5.14]{apery-icm}, that is, numerical semigroups $T$ such that $S\subseteq T$. They will play a central role in Section~\ref{sec:lattice}. We denote by $\Os$ the set of oversemigroups of $S$.

\begin{remark}
  One may wonder when $\Os$ coincides with $\Ni$. This question was solved already in \cite[Proposition~4.4]{b-k}: $\Os=\Ni$ if and only if the multiplicity of $S$ is less than or equal to two. In this case, $(\Ni,\preceq)$ is a chain of length $\operatorname{g}(S)+1$.
\end{remark}

The following two technical lemmas will be handy later.

\begin{lemma}\label{lem:Os-inclusion-le}
  Let $S$ be a numerical semigroup, and let $I\in\Ni$, $J\in \Os$. Then, $I\subseteq J$ if and only if $I+J=J$.
  \end{lemma}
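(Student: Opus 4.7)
The plan is to prove both implications directly from the definitions, using two ingredients that are already in place in the excerpt: first, $J$ is an oversemigroup of $S$, hence a numerical semigroup, so $0 \in J$ and $J + J = J$ (this is exactly the idempotency recorded in \cite[Proposition~5.14]{apery-icm} just before the lemma); second, $I$ is normalized, so $0 \in I$.

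For the forward direction, I would assume $I \subseteq J$ and show the two inclusions that give $I + J = J$. One inclusion is immediate: since $0 \in I$, we have $J = 0 + J \subseteq I + J$. For the other, I use the hypothesis together with idempotency: $I + J \subseteq J + J = J$.

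For the backward direction, I would assume $I + J = J$ and recover $I \subseteq J$ in one line: since $0 \in J$, every $i \in I$ satisfies $i = i + 0 \in I + J = J$.

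There is no real obstacle here; the statement is a short formal consequence of the fact that the idempotents of $(\Ni,+)$ are precisely the oversemigroups of $S$ together with the normalization condition $0 \in I$. The only care required is to invoke $0 \in I$ for one inclusion and $0 \in J$ for the other, and to use $J + J = J$ (rather than re-proving that $J$ is closed under addition) at the step $I + J \subseteq J + J = J$.
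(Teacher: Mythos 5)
Your proof is correct and matches the paper's own argument essentially line for line: both directions use $0\in I$, $0\in J$, and the idempotency $J+J=J$ of the oversemigroup $J$ in exactly the same way. Nothing further to add.
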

  \begin{proof}
    If $I+J=J$, then $I=I+0\subseteq I+J=J$. If  $I\subseteq J$, then $J=0+J\subseteq I+J\subseteq J+J=J$, and so $J=I+J$.
  \end{proof}
  
  \begin{lemma}\label{lem:join-with-idempotent}
    Let $S$ be a numerical semigroup, and let $I\in \Os$. For every $J\in \Ni$, $I\preceq J$ if and only if $J=I+J$. In particular, $I\vee J=I+J$ for every $J\in \Ni$.
  \end{lemma}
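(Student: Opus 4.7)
The plan is to prove the equivalence first and then derive the join assertion by a direct supremum check, using only the hypothesis that $I\in\Os$ is idempotent in $(\Ni,+)$ (see \cite[Proposition~5.14]{apery-icm}) together with the definition of $\preceq$.

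For the forward implication, assume $I\preceq J$, so that $J=I+K$ for some $K\in\Ni$. Since $I$ is an oversemigroup of $S$, it is idempotent, i.e., $I+I=I$. Hence
\[
I+J = I+(I+K) = (I+I)+K = I+K = J.
\]
The backward implication is immediate: if $J=I+J$, then, since $J\in\Ni$, taking $K=J$ in the definition of $\preceq$ witnesses $I\preceq J$. (Observe also that $I\in\Ni$ since every oversemigroup contains $0$, so the statement $I\preceq J$ is well-typed.)

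For the "in particular" part, first note $I+J\in\Ni$ because $\min(I+J)=\min(I)+\min(J)=0$. I would then verify in turn that $I+J$ is an upper bound of $\{I,J\}$ with respect to $\preceq$: using idempotence again, $I+(I+J)=(I+I)+J=I+J$, so by the equivalence just proved $I\preceq I+J$; and $J+I=I+J$ together with $I\in\Ni$ gives $J\preceq I+J$ directly from the definition.

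Finally, I would check that $I+J$ is the least such upper bound. Let $L\in\Ni$ satisfy $I\preceq L$ and $J\preceq L$. From $I\preceq L$ and the equivalence already established, $L=I+L$. From $J\preceq L$, there exists $M\in\Ni$ with $L=J+M$. Combining these,
\[
L = I+L = I+(J+M) = (I+J)+M,
\]
so $I+J\preceq L$, as required. I don't anticipate any genuine obstacle here: the whole argument is a bookkeeping exercise that leverages a single nontrivial fact, namely the idempotence of oversemigroups in $(\Ni,+)$.
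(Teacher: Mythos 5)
Your proof is correct and follows essentially the same route as the paper's: both directions of the equivalence rest on the idempotence $I+I=I$ of oversemigroups, and the supremum verification (upper bound plus the computation $L=I+L=(I+J)+M$ for any common upper bound $L$) is the same argument the paper gives. No issues.
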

  \begin{proof}
    Suppose that $I\preceq J$, and so $I+K=J$ for some $K\in \Ni$. Then, $J=I+K=I+I+K=I+J$. The converse is trivial. 
    
    Now, suppose that $K\in \Ni$ is such that $I\preceq K$ and $J\preceq K$. Then, there exists $L$ for which $J+L=K$. We now use that $I+K=K$, and obtain that $I+J+L=I+K=K$, which means that $I+J\preceq K$. Also, $I\preceq I+J$ and $J\preceq I+J$, which proves that $I\vee J=I+J$.
  \end{proof}

As a consequence of these two lemmas, we get the following result.
\begin{corollary}
Let $S$ be a numerical semigroup, and let $I,J\in \Os$. The following are equivalent:
\begin{enumerate}
    \item $I\subseteq J$,
    \item $I+J=J$,
    \item $I\preceq J$.
\end{enumerate}
\end{corollary}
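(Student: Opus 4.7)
The plan is to chain the two preceding lemmas, after first observing that every oversemigroup is itself a normalized ideal. Indeed, if $T \in \Os$, then $T$ is a numerical semigroup containing $S$, so $\min(T)=0$ and $T+S \subseteq T+T \subseteq T \subseteq T+S$, giving $T\in \Ni$. In particular, both $I$ and $J$ lie in $\Ni$.

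For the equivalence (1) $\Leftrightarrow$ (2), I would apply Lemma~\ref{lem:Os-inclusion-le} directly, taking the normalized ideal to be $I$ and the oversemigroup to be $J$; this yields $I\subseteq J$ if and only if $I+J=J$ without further work.

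For (2) $\Rightarrow$ (3), if $I+J=J$, then since $I\in \Ni$ the equality itself exhibits $I\preceq J$ via the witness $K=J$ in the definition. For the reverse direction (3) $\Rightarrow$ (2), I would invoke Lemma~\ref{lem:join-with-idempotent}, this time using that $I\in \Os$ is an idempotent of $(\Ni,+)$; that lemma says precisely that for an idempotent $I$ and any $J\in \Ni$, $I\preceq J$ forces $J=I+J$.

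There is no real obstacle here; the statement is a bookkeeping consequence of the two lemmas once one notes the inclusion $\Os \subseteq \Ni$. The only mild subtlety is choosing the correct lemma for each direction: Lemma~\ref{lem:Os-inclusion-le} handles the inclusion side and Lemma~\ref{lem:join-with-idempotent} handles the order $\preceq$ side, with (2) acting as the common bridge between them.
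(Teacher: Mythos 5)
Your proof is correct and matches the paper's intent exactly: the paper states this corollary without proof as "a consequence of these two lemmas," and your chaining of Lemma~\ref{lem:Os-inclusion-le} for (1)$\Leftrightarrow$(2) and Lemma~\ref{lem:join-with-idempotent} (together with the trivial witness $K=J$) for (2)$\Leftrightarrow$(3) is precisely that consequence. The preliminary observation that $\Os\subseteq\Ni$ is a sensible bit of bookkeeping and nothing is missing.
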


The following result will be used later, and describes the set of ideals of an oversemigroup $T$ of $S$ as the set of ideals in $S$ greater than or equal to $T$ with respect to $\preceq$.

\begin{lemma}\label{lem:oversemigroup-uparrow}
    Let $S$ be a numerical semigroup and let $T\in \Os$. Then, $\Ni[T]={\uparrow} T$ (in $\Ni$).
\end{lemma}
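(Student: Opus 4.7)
The plan is to prove the two inclusions ${\uparrow}T \subseteq \Ni[T]$ and $\Ni[T] \subseteq {\uparrow}T$ separately, using Lemma~\ref{lem:join-with-idempotent} as the engine for the first and a short verification that ideals of $T$ remain ideals of $S$ for the second.

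For the inclusion ${\uparrow}T \subseteq \Ni[T]$, I would take $I\in\Ni$ with $T\preceq I$. Since $T\in\Os$, Lemma~\ref{lem:join-with-idempotent} yields $I = T + I$, which is exactly the statement that $I$ is an ideal of $T$ (the monoid condition $T+I=I$). Because $I\in\Ni$ we also have $\min(I)=0$, so $I$ is a normalized ideal of $T$, i.e., $I\in \Ni[T]$.

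For the reverse inclusion $\Ni[T]\subseteq {\uparrow}T$, let $I\in\Ni[T]$, so $I+T=I$ and $\min(I)=0$. The step that requires a moment of care is verifying that $I$ is a (relative) ideal of $S$ in the first place, so that it makes sense to speak of $I$ as an element of $\Ni$. The condition $I+S=I$ is immediate: from $S\subseteq T$ we get $I+S\subseteq I+T=I$, and $0\in S$ gives the reverse inclusion. For the boundedness condition defining a relative ideal of $S$, the existence of $\min(I)=0$ shows $I\subseteq \mathbb{N}$, and choosing any integer $z\ge \operatorname{F}(S)+1$ puts $z+I$ entirely above $\operatorname{F}(S)$, hence inside $S$. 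Therefore $I\in\Ni$. Finally, $T+I=I$ exhibits $T\preceq I$ (take the witness ideal to be $I$ itself, which lies in $\Ni$), so $I\in{\uparrow}T$.

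The only potential stumbling block is the verification that a normalized ideal of $T$ is automatically a normalized ideal of $S$, but this reduces to the easy observation above that $S\subseteq T$ preserves the ideal equation and that boundedness below follows from having a minimum. Everything else is a direct application of Lemma~\ref{lem:join-with-idempotent} and the definition of $\preceq$.
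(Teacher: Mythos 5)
Your proof is correct and follows essentially the same route as the paper: both directions rest on the idempotency $T+T=T$, with $T\preceq I$ witnessed by $I$ itself in one direction and Lemma~\ref{lem:join-with-idempotent} (which is just the paper's two-line computation packaged as a lemma) in the other. Your extra verification that $\Ni[T]\subseteq\Ni$ is a welcome bit of care that the paper leaves implicit, but it does not change the substance of the argument.
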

\begin{proof}
    If $I\in \Ni[T]$, then $I+T=I$ and consequently $T\preceq I$. 

    Let $I\in {\uparrow}T$. Then, $T\preceq I$ and so there exists $J\in \Ni$ such that $T+J=I$. Thus, $I+T=J+T+T=J+T=I$, and so $I\in \Ni[T]$.
\end{proof}

\subsection{Multiplicity three}

Let $S$ be a numerical semigroup with multiplicity three, with Kunz coordinates $S=(u,v)_{\mathcal{K}}$. Then, by \cite[Theorem~4.4]{apery-icm}, $(x,y)$ are the Kunz coordinates of a (normalized) ideal $I$ of $S$ if and only if
\begin{equation}\label{eq:kunz-cond}
\begin{cases}
  x\le u,\\
  y\le v,\\
  x+u\ge y,\\
  y+v+1\ge x.
\end{cases}
\end{equation}

Basically, the first two inequalities mean that $S\subseteq I$, while the last two inequalities translate to $I+S=I$.

For $I=(a,b)_{\mathcal{K}}$ and $J=(c,d)_{\mathcal{K}}$, \eqref{eq:kunz-coordinates-sum-general} particularizes to
\begin{equation}\label{eq:kunz-sum-ideals}
    I+J=(\min(\{a,c,b+d+1\}), \min(\{b,d,a+c\}))_{\mathcal{K}}.
\end{equation}

\begin{remark}\label{rem:two-cases-non-idempotent}
  As a particular instance of \eqref{eq:kunz-sum-ideals}, for $I=(a,b)_{\mathcal{K}}$, we obtain 
  \[I+I=(\min(\{a,2b+1\}), \min(\{b,2a\}))_{\mathcal{K}}.\] 
  Therefore,  $I+I\neq I$ if and only if either $2b+1<a$ or $2a<b$ (not both, since this would lead to $4b+2<2a<2b$, a contradiction).
\end{remark}

Let $S$ be a numerical semigroup with multiplicity three and Kunz coordinates $(u,v)$. Let $I=(x,y)_{\mathcal{K}}$ be an ideal of $S$. If $I$ is an idempotent ideal, then by Remark~\ref{rem:two-cases-non-idempotent}, $x\leq 2y +1$ and $y\leq 2x$. It is easily checked that for idempotents, \eqref{eq:kunz-cond} turns into: 
\begin{equation}\label{eq:kunz-cond-idempotentes}
\begin{cases}
  x\le u,\\
  y\le v,\\
  x\le 2y +1,\\
  y\le 2x.
\end{cases}
\end{equation}

Figure~\ref{fig:h-d-3} shows the Hasse diagram of $(\mathfrak{I}_0(\langle 3,13,17\rangle),\preceq)$. Ideals are represented and placed according to their Kunz coordinates. Idempotents are displayed in grey.

\begin{figure}
  \includegraphics[scale=0.5]{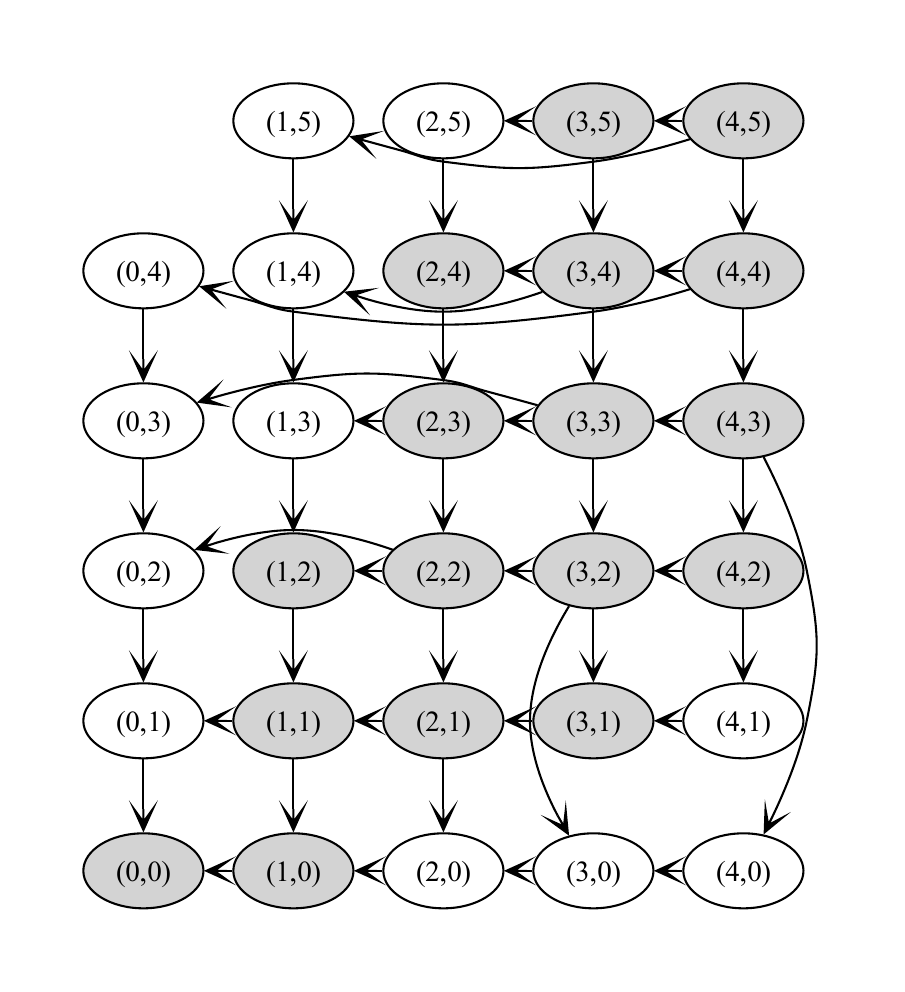}
\caption{Hasse diagram of $(\mathfrak{I}_0(\langle 3,13,17\rangle),\preceq)$; ideals are represented and placed according to their Kunz coordinates.}  
\label{fig:h-d-3}
\end{figure}

\begin{proposition}\label{prop:union-eq-sum-mult-3}
  Let $S$ be a numerical semigroup with multiplicity three. For all $I,J\in \Os$, $I+J=I\cup J$.
\end{proposition}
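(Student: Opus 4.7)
The plan is to translate everything to Kunz coordinates. Write $I=(a,b)_{\mathcal{K}}$ and $J=(c,d)_{\mathcal{K}}$. From the formulas recalled just before \eqref{eq:kunz-sum-ideals}, we have $I\cup J=(\min(a,c),\min(b,d))_{\mathcal{K}}$, and by \eqref{eq:kunz-sum-ideals} the sum is
\[
I+J=(\min(a,c,b+d+1),\min(b,d,a+c))_{\mathcal{K}}.
\]
Thus $I+J=I\cup J$ becomes equivalent to the two numerical inequalities
\[
b+d+1\ge \min(a,c) \qquad \text{and} \qquad a+c\ge \min(b,d).
\]

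The key input is that $I,J\in\Os$, so both are idempotent, and hence by \eqref{eq:kunz-cond-idempotentes} (equivalently, by Remark~\ref{rem:two-cases-non-idempotent}) their Kunz coordinates satisfy $a\le 2b+1$, $b\le 2a$, $c\le 2d+1$ and $d\le 2c$. Adding the bounds on $a$ and $c$ yields $a+c\le 2(b+d+1)$; combining with the trivial $a+c\ge 2\min(a,c)$ gives the first inequality. Adding the bounds on $b$ and $d$ yields $b+d\le 2(a+c)$; combining with $b+d\ge 2\min(b,d)$ gives the second.

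I do not anticipate a real obstacle: once the statement is phrased in Kunz coordinates, its content is captured entirely by the idempotency bounds, and the proof reduces to the two one-line arguments above, with no case split. The only subtlety worth flagging is the ``$+1$'' appearing in $b+d+1\ge\min(a,c)$, which is precisely matched by the idempotency inequality $a\le 2b+1$ (rather than $a\le 2b$); this is what makes multiplicity three the right setting for the argument to go through cleanly.
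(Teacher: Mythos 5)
Your proof is correct. The reduction of $I+J=I\cup J$ to the two inequalities $b+d+1\ge\min(a,c)$ and $a+c\ge\min(b,d)$ is exactly right given \eqref{eq:kunz-sum-ideals} and the Kunz-coordinate formula for unions, and the averaging step ($2\min(a,c)\le a+c\le (2b+1)+(2d+1)=2(b+d+1)$, and symmetrically for the second coordinate) is a clean, gap-free way to derive both from the idempotency bounds of \eqref{eq:kunz-cond-idempotentes}. The route is genuinely different from the paper's: there, the argument first disposes of the case where $I$ and $J$ are comparable under inclusion via Lemma~\ref{lem:Os-inclusion-le}, and then, for incomparable ideals, fixes an ordering of the coordinates (say $a\le c$, $b\ge d$) and checks term by term that the extra entries $b+d+1$ and $a+c$ never achieve the minimum. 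Your version needs no case split at all and isolates the only property of oversemigroups actually used, namely the idempotency inequalities from Remark~\ref{rem:two-cases-non-idempotent}; the paper's version, by contrast, leans partly on the monotonicity structure of the incomparable case. Your closing observation about the ``$+1$'' being matched precisely by $a\le 2b+1$ is apt: it is the exact point where multiplicity three enters.
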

\begin{proof}
  Let $a_1,b_1,a_2,b_2\in \mathbb{N}$ such that $I=(a_1,b_1)_{\mathcal{K}}$ and $J=(a_2,b_2)_{\mathcal{K}}$. If $I\subseteq J$, then by Lemma~\ref{lem:Os-inclusion-le}, we have that $J=I+J$, which leads to $I\cup J=J=I+J$. The same argument works for $J\subseteq I$. Thus, we may suppose that $I$ and $J$ are incomparable with respect to set inclusion, which by \eqref{eq:inclusion-kunz-coord} means that $(a_1,b_1)$ and $(a_2,b_2)$ are incomparable in $(\mathbb{N}^2,\le)$. 

  Suppose that $a_1\le a_2$ and $b_1\ge b_2$. We know by \eqref{eq:kunz-coordinates-sum-general} that 
  \[
  I+J=(\min(\{a_1,b_1,a_2+b_2+1\}), \min(\{a_2,b_2,a_1+b_1\}))_{\mathcal{K}}.
  \]
  Also, $a_1\le a_2\le a_2+b_2+1$, 
  and so $\min(\{a_1,b_1,a_2+b_2+1\})=\min(\{a_1,b_1\})$. Similarly, $a_1+b_1\ge a_1+b_2\ge b_2$, and consequently $\min(\{a_2,b_2,a_1+b_1\})= \min(\{a_2,b_2\})$. Thus,
  \[
  I+J=(\min(\{a_1,b_1\}), \min(\{a_2,b_2\}))_{\mathcal{K}}=I\cup J.
  \]
  The case $a_1\ge a_2$ and $b_1\le b_2$ follows analogously.
\end{proof}

With this, we easily obtain the following consequence.

\begin{corollary}\label{cor: Os reticulo}
  Let $S$ be a numerical semigroup with multiplicity three. Then, $(\Os,\preceq)$ is a distributive lattice with $I\vee J=I\cup J$, $I\wedge J=I\cap J$, $\max_\preceq \Os=\mathbb{N}$, and $\min_\preceq \Os=S$.
\end{corollary}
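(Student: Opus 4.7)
The plan is to combine Proposition~\ref{prop:union-eq-sum-mult-3} with Lemmas~\ref{lem:Os-inclusion-le} and~\ref{lem:join-with-idempotent}, and then invoke Theorem~\ref{th: Nation Th: 2.4} to promote $(\Os,\preceq)$ from a join-semilattice to a lattice. First I would observe that $\Os$ is closed under $+$: for $I,J\in\Os$ the sum $I+J$ is again idempotent, since $(I+J)+(I+J)=(I+I)+(J+J)=I+J$, and it obviously contains $0$, so $I+J\in\Os$. Then, for $I,J\in\Os$, Lemma~\ref{lem:join-with-idempotent} gives $I\vee J=I+J$ inside $(\Ni,\preceq)$, while Proposition~\ref{prop:union-eq-sum-mult-3} identifies this sum with $I\cup J$; because the resulting element still lies in $\Os$, it is also the join in the subposet $(\Os,\preceq)$.

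Next I would pin down the extremes. The minimum is $S$: since $S$ is the identity of $(\Ni,+)$, we have $S+T=T$, i.e. $S\preceq T$, for every $T\in\Os$. The maximum is $\mathbb{N}$: we have $\mathbb{N}\in\Os$, and for every $T\in\Os$ the inclusion $T\subseteq\mathbb{N}$ combined with Lemma~\ref{lem:Os-inclusion-le} (applied with $J=\mathbb{N}$) yields $T+\mathbb{N}=\mathbb{N}$, hence $T\preceq\mathbb{N}$.

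At this point $(\Os,\preceq)$ is a finite join-semilattice with zero, so Theorem~\ref{th: Nation Th: 2.4} makes it a lattice whose meet is given by $I\wedge J=\bigvee(\downarrow I\cap\downarrow J)$. I would then show that this meet equals $I\cap J$. Indeed, $I\cap J$ is itself a numerical semigroup containing $S$, so it lies in $\Os$; by Lemma~\ref{lem:Os-inclusion-le}, the inclusions $I\cap J\subseteq I$ and $I\cap J\subseteq J$ give $I\cap J\preceq I$ and $I\cap J\preceq J$. Conversely, any $K\in\Os$ with $K\preceq I$ and $K\preceq J$ satisfies $K\subseteq I$ and $K\subseteq J$ (because $\preceq$ refines $\subseteq$), hence $K\subseteq I\cap J$, and a further application of Lemma~\ref{lem:Os-inclusion-le} gives $K\preceq I\cap J$. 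Thus $I\cap J$ is the maximum of $\downarrow I\cap\downarrow J$, and in particular its supremum.

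Finally, distributivity is automatic: once $\vee$ and $\wedge$ on $\Os$ are identified with $\cup$ and $\cap$, the distributive laws are inherited from $(\mathcal{P}(\mathbb{Z}),\cup,\cap)$. I do not anticipate a real obstacle; the substantive input (closure of $\Os$ under $+$ and the identification $I+J=I\cup J$) has already been absorbed into Proposition~\ref{prop:union-eq-sum-mult-3}, which is the only place where the multiplicity-three hypothesis is actually used.
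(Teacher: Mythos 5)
Your proposal is correct and follows essentially the same route as the paper: both arguments hinge on Proposition~\ref{prop:union-eq-sum-mult-3} together with Lemma~\ref{lem:Os-inclusion-le} to identify the join with the union and the meet with the intersection (the paper verifies the meet directly rather than passing through Theorem~\ref{th: Nation Th: 2.4}, but that detour is harmless and lands in the same place). Your explicit observation that $I+J$ is idempotent, so that $I\cup J=I+J$ stays inside $\Os$, is a point the paper leaves implicit and is worth having spelled out.
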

\begin{proof}
  Let $K\in \Os$ such that $I\preceq K$ and $J\preceq K$. Then, $I\subseteq K$ and $J\subseteq K$, which yields $I\cup J\subseteq K$. By Lemma~\ref{lem:Os-inclusion-le}, $(I\cup J)+K=K$ and so $I\cup J\preceq K$. This proves that $I\cup J=I\vee J$.

  Analogously, one shows that $I\cap J=I\wedge J$. The other assertions are trivial.
\end{proof}

\section{When inclusion coincides with the order induced by addition}\label{sec:inclusion-eq-preceq}

For a numerical semigroup $S$ and  $I,J\in \Ni$, recall that $I\preceq J$ implies $I\subseteq J$. Thus, it is natural to ask under which circumstances the order relations $\preceq$ and $\subseteq$ coincide. 
  
\begin{example}
  The numerical semigroup with the least possible genus such that $\subseteq$ and $\preceq$ are not the same is $S=\langle 4,5,6,7\rangle$: $\{0,2\}+S\subseteq \{0,1,2\}+S$, while $\{0,2\}+S\not\preceq \{0,1,2\}+S$ (dashed line in Figure~\ref{fig:subset-not-preceq}).
\end{example}

\begin{figure}
    \centering
        \raisebox{2.6em}{%
        \includegraphics[scale=0.6]{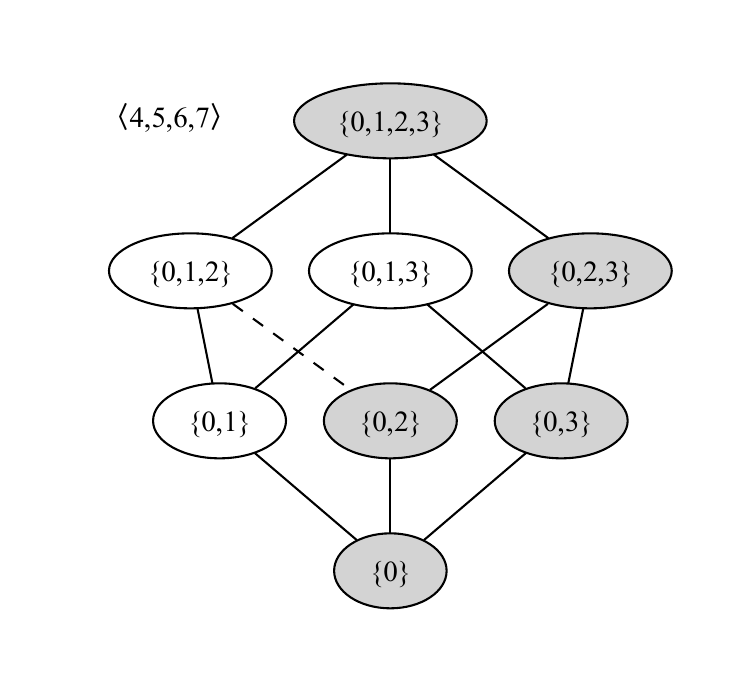}} \quad       \includegraphics[scale=0.6]{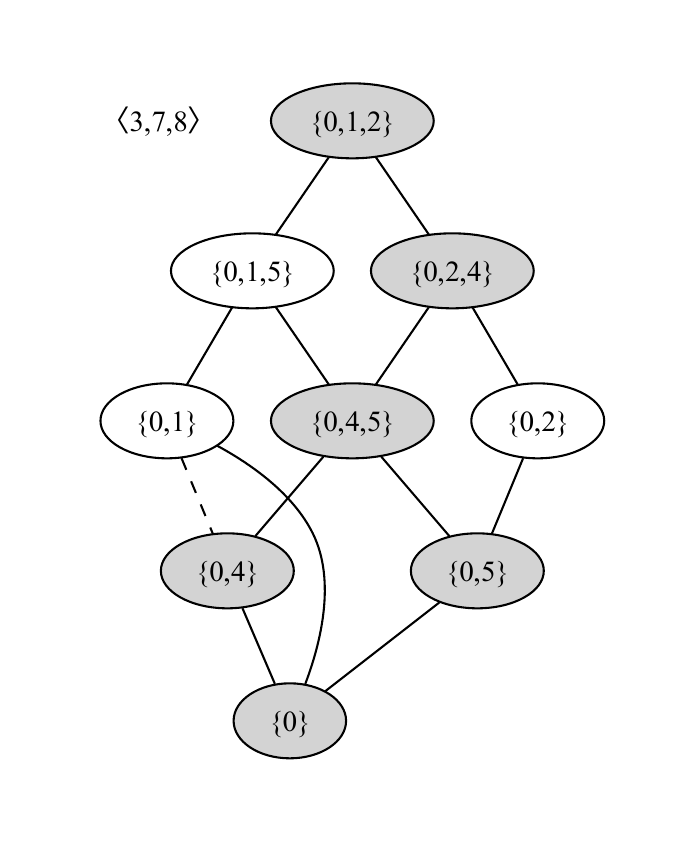}
    \caption{The posets $(\Ni[\langle 4,5,6,7\rangle],\preceq)$ and $(\Ni[\langle 3,7,8\rangle],\preceq)$, witnessing that the order relations $\subseteq$ and $\preceq$ are not equal in general.}
    \label{fig:subset-not-preceq}
\end{figure}

\begin{proposition}\label{prop:contained-preceq-coincide}
  Let $S$ be a numerical semigroup. Then, $\subseteq$ equals $\preceq$ in $\Ni$ if and only if 
  \[S\in \{ \langle 3,4\rangle, \langle 3,4,5\rangle, \langle 3,5\rangle, \langle 3,5,7\rangle \}\cup \{\langle 2,2k+1\rangle : k \in\mathbb{N}\}.\]
\end{proposition}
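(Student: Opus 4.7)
The plan is to prove both directions of the equivalence by splitting cases on the multiplicity of $S$. For the sufficient direction, $S = \langle 2, 2k+1\rangle$ has multiplicity at most two, so by the earlier remark that $\Os = \Ni$ exactly in this case, we have $\Ni = \Os$ and $(\Ni, \preceq)$ is a chain of length $\operatorname{g}(S)+1$; since $\subseteq$ is also a total order on this chain (ideals are parametrized by their unique Kunz coordinate) and $\preceq$ refines $\subseteq$, the two orders coincide. For each of the four multiplicity-three semigroups, whose Kunz coordinates $(u,v)$ are in $\{(1,1),(1,2),(2,1),(3,1)\}$, I would argue uniformly: by \eqref{eq:kunz-sum-ideals}, for any pair of ideals $I = (a,b)_{\mathcal{K}} \subsetneq J = (a',b')_{\mathcal{K}}$ the sum $I + J$ equals $J$, so $K = J$ witnesses $I \preceq J$. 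The two identities $\min(a,a',b+b'+1)=a'$ and $\min(b,b',a+a')=b'$ required for $I + J = J$ reduce to $a' \le b+b'+1$ and $b' \le a+a'$, which follow from \eqref{eq:kunz-cond} once $I \ne J$ is assumed, because $u = 1$ or $v = 1$ restricts the Kunz coordinates enough.

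For the necessary direction in multiplicity three, the constraints on the Kunz coordinates of a numerical semigroup give $v \le 2u$ and $u \le 2v+1$, so the four listed semigroups are exactly those with $u = 1$ or $v = 1$. For the remaining case $u \geq 2$ and $v \geq 2$, I would exhibit the counterexample $I = (1,2)_{\mathcal{K}}$, $J = (0,2)_{\mathcal{K}}$: both satisfy \eqref{eq:kunz-cond} thanks to $u, v \geq 2$, and $I \subsetneq J$ by \eqref{eq:inclusion-kunz-coord}. If $I + K = J$ for some $K = (c,d)_{\mathcal{K}}$, then \eqref{eq:kunz-sum-ideals} forces $\min(1, c, d+3) = 0$, hence $c = 0$; but then $\min(2, d, 1+c) \leq 1$ cannot equal $2$, a contradiction.

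For the necessary direction in multiplicity $m \geq 4$, I would take $I = \{0, 2\} + S$ and $J = \{0, 1, 2\} + S$, normalized ideals of $S$ with $I \subsetneq J$ since $1 \in J \setminus I$. If $I + K = J$ for some $K \in \Ni$, then writing $1 = i + k$ with $i \in I$, $k \in K$ forces $i = 0$ and $k = 1$ (the smallest nonzero element of $I$ is $2$, as $1 \notin S \cup (2+S)$), so $1 \in K$; consequently $3 = 2 + 1 \in I + K$, but $3 \notin J$ because for $m \geq 4$ none of $1, 2, 3$ lies in $S$, a contradiction. The main difficulty I anticipate lies in the sufficient direction for the multiplicity-three cases: although the uniform choice $K = J$ avoids enumerating each semigroup's Hasse diagram, verifying that $I + J = J$ demands a careful reading of \eqref{eq:kunz-cond} to confirm that the troublesome configuration $(1,2)_{\mathcal{K}} \subsetneq (0,2)_{\mathcal{K}}$ of the necessary direction cannot arise when $u = 1$ or $v = 1$ (e.g., $(0,2)_{\mathcal{K}}$ already fails to be an ideal for $u = 1$, since $0 + u = 1 < 2$).
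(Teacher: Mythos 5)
Your proposal is correct, and for multiplicity at most two and at least four it coincides with the paper's argument (same counterexample $\{0,2\}+S\subseteq\{0,1,2\}+S$ for $\operatorname{m}(S)\ge 4$, same reduction to $\Os=\Ni$ for $\operatorname{m}(S)\le 2$). The multiplicity-three part, however, takes a genuinely different route. The paper shows via the ideals $\{0,4\}+S\subseteq\{0,1\}+S$ that $4\in S$ or $5\in S$, enumerates the oversemigroups of $\langle 3,4\rangle$ and $\langle 3,5\rangle$ with multiplicity three, and then delegates the verification that $\subseteq$ and $\preceq$ coincide for the four survivors to a computation with \texttt{numericalsgps}. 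You instead work entirely in Kunz coordinates: the constraints $1\le u\le 2v+1$, $1\le v\le 2u$ show the listed semigroups are exactly those with $u=1$ or $v=1$; the single pair $(1,2)_{\mathcal{K}}\subsetneq(0,2)_{\mathcal{K}}$ kills all semigroups with $u,v\ge 2$ at once; and sufficiency is proved by hand via $I+J=J$ for $I\subsetneq J$. This buys a computation-free and more uniform proof. The only part you leave as a sketch is the verification of $a'\le b+b'+1$ and $b'\le a+a'$, and it does go through, but be aware that it genuinely needs the strictness $I\ne J$ in the boundary cases: for instance, when $v=1$ and $b=0$ one gets $b'=0$ and $a\le b+v+1=2$ from \eqref{eq:kunz-cond}, so $a'=2$ would force $a=2$ and hence $I=J$; symmetrically, when $a=0$ one gets $a'=0$ and $b\le a+u$, so $b'\ge 1$ would again force $I=J$ when $u=1$ (and when $v=1$ it is excluded by $b'\le b\le v=1$ together with $I\ne J$). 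Writing out these few cases (two for $u=1$, two for $v=1$) completes the argument; note also that restricting to $I\subsetneq J$ rather than $I\subseteq J$ is essential, since $I+J=J$ with $I=J$ would assert idempotency of $J$, which fails for some normalized ideals of these semigroups.
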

\begin{proof}
  Suppose that $\operatorname{m}(S)\ge 4$. Then, $\{0,2\}+S\subseteq \{0,1,2\}+S$. If there exists $I\in \Ni$ with $(\{0,2\}+S)+I=\{0,1,2\}+S$, then $1\in I$. But then, $1+2=3\in (\{0,2\}+S)+I$ and $3\not\in \{0,1,2\}+S$, a contradiction. Thus, $\subseteq$ equals $\preceq$ in $\Ni$ forces $\operatorname{m}(S)\le 3$. We already know that for $\operatorname{m}(S)\le 2$, we have that $\Os=\Ni$, and thus by Corollary~\ref{cor: Os reticulo}, $\subseteq$ equals $\preceq$.
  
  It remains to see what happens in the case $\operatorname{m}(S)=3$. Suppose that $4\not\in S$. Notice that $\{0,4\}+S\subseteq \{0,1\}+S$, since $4=1+3\in \{0,1\}+S$. If there exists $I$ with $(\{0,4\} + S)+I=\{0,1\}+S$, then $1\in I$. 
  This implies that $5 \in \{0,1\} + S $ (as $5 = 4 + 1\in(\{0,4\} + S) + I$).
  Hence, if $S$ has multiplicity three and $\subseteq$ equals $\preceq$, then $5\in S$, and so $\langle 3,5\rangle \subseteq S$. There are only two oversemigroups of $\langle 3,5\rangle$ with multiplicity three and not containing four, namely $\langle 3, 5, 7 \rangle$ and $\langle 3, 5 \rangle$ (this can be easily checked with the help of \cite{numericalsgps}): for all of them $\subseteq$ and $\preceq$ coincide in $\Ni$. The only case left is when $\langle 3,4\rangle \subseteq S$. This case yields two oversemigroups with multiplicity three,  $\langle 3,4\rangle$ and $\langle 3,4,5 \rangle$, both fulfilling that $\subseteq$ and $\preceq$ coincide in $\Ni$.  
\end{proof}

It follows from the above proposition that, for $S\in \{ \langle 3,4\rangle, \langle 3,4,5\rangle, \langle 3,5\rangle, \langle 3,5,7\rangle \}\cup \{\langle 2,2k+1\rangle : k \in\mathbb{N}\}$, $(\Ni,\preceq)$ is a lattice. However, as the next example shows, this does not cover all the cases for which $(\Ni,\preceq)$ is indeed a lattice.

\begin{example}
  One can check that for $S=\langle 3,7,8\rangle$, the poset $(\Ni,\preceq)$ is a lattice, however we already know that in $\Ni$ the orders $\preceq$ and $\subseteq$ are not the same in virtue of Proposition~\ref{prop:contained-preceq-coincide}. In fact, $\{0,4\}+S\subseteq \{0,1\}+S$, while $\{0,4\}+S\not\preceq \{0,1\}+S$ (see Figure~\ref{fig:subset-not-preceq}).
\end{example}

\begin{remark}
Casabella proves in \cite{laura} that if $S$ is a numerical semigroup such that the ideal class semigroup of $S$  and the ideal class semigroup of the semigroup algebra of $S$ ($K[S]$) are isomorphic, then $S$ has either multiplicity less than two or $S=\langle 3,4,5\rangle$ or $S=\langle 3,4\rangle$. By Proposition \ref{prop:contained-preceq-coincide}, this means that if $S$ and $K[S]$ have isomorphic ideal class monoids, then $\preceq$ and $\subseteq$ coincide in $\Ni$.   
\end{remark}

\section{The poset of normalized ideals of a numerical semigroup with multiplicity three is a lattice}\label{sec:lattice}

Next, we show that for any numerical semigroup $S$ with multiplicity three, $(\Ni,\preceq)$ is a lattice. First (and motivated by Lemma~\ref{lem:join-with-idempotent}), we will focus our attention on ideals that are not idempotent.

Recall that if $(a,b)_{\mathcal{K}}\in\Ni\setminus \Os$, then either $2b+1<a$ or $2b<b$ (Remark~\ref{rem:two-cases-non-idempotent}). Next, we study these two cases separately, showing that $(a,b)_{\mathcal{K}}$ has a unique cover.

\begin{lemma}\label{lem:m3-case-1}
Let $S$ be a numerical semigroup with multiplicity three and let $I=(a,b)_{\mathcal{K}}$ be an ideal of $S$ such that $2b+1<a$. Then, $J =(a-1,b)_{\mathcal{K}}$ is the unique cover of $I$ with respect to $\preceq$.
Moreover, $I+L=J+L$ for all $L\in \Ni$.

\end{lemma}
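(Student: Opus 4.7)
The plan is to prove the ``moreover'' identity first by direct computation in Kunz coordinates, and then to deduce the unique-cover property from it. I would start by verifying that $J = (a-1, b)_{\mathcal{K}}$ is itself an ideal, by checking the four inequalities in \eqref{eq:kunz-cond}; three of them follow immediately from the corresponding ones for $I$, and the only non-trivial one is $(a-1) + u \ge b$, which uses $a - 1 \ge 2b + 1 \ge b$ from the hypothesis $2b+1 < a$.

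The heart of the argument is an explicit comparison of $I + L$ and $J + L$ for an arbitrary $L = (c, d)_{\mathcal{K}} \in \Ni$, via \eqref{eq:kunz-sum-ideals}: these two ideals differ (at most) in that the first coordinate of the sum replaces $a$ by $a-1$, and the $(a+c)$-term in the second coordinate is replaced by $(a-1)+c$. For the second coordinate, the hypothesis $2b + 1 < a$ forces $a - 1 > b$, so $(a-1) + c \ge a - 1 > b \ge \min\{b, d\}$ and similarly for $a + c$, whence both second coordinates reduce to $\min\{b, d\}$ regardless of $L$. For the first coordinate, the two minima coincide precisely when $\min\{c, b + d + 1\} \le a - 1$, which fails exactly when $c \ge a$ and $d \ge a - b - 1$, i.e., exactly when $L$ fixes $I$ (note that then $d \ge a - b - 1 > b$ is automatic, so the second coordinate is also fixed). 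Hence $I + L = J + L$ for every $L \in \Ni$ not fixing $I$, which is the form of the \emph{moreover} clause used below.

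To finish, I would exhibit a witness to $I \prec J$ by taking $L = (a-1, a-b-2)_{\mathcal{K}}$: the hypothesis $a \ge 2b + 2$ gives $a - b - 2 \ge b \ge 0$, and $b + v + 1 \ge a$ yields $v \ge a - b - 2$, so $L$ satisfies \eqref{eq:kunz-cond}; a direct computation using \eqref{eq:kunz-sum-ideals} then gives $I + L = J$. For uniqueness, any $K$ with $I \prec K$ can be written as $K = I + L$ with $K \ne I$, so $L$ does not fix $I$; the sum identity therefore yields $K = J + L$ and hence $J \preceq K$. Thus $J$ is the minimum of ${\uparrow} I \setminus \{I\}$, i.e., its unique cover. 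The main obstacle is the case analysis for the minima in the central step; the observation that streamlines everything is that the discrepancy between $I+L$ and $J+L$ is concentrated entirely in the first coordinate and reduces precisely to the condition that $L$ fixes $I$.
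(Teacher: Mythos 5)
Your proof is correct and takes essentially the same route as the paper's: an explicit witness computed via \eqref{eq:kunz-sum-ideals} gives $I\prec J$, and the identity $I+L=J+L$ for every $L$ with $I+L\neq I$ yields both the covering property and uniqueness (your witness $(a-1,a-b-2)_{\mathcal{K}}$ versus the paper's $(a,a-b-2)_{\mathcal{K}}$, and your derivation of the cover from $J=\min({\uparrow}I\setminus\{I\})$ versus the paper's cardinality argument $|J\setminus I|=1$, are cosmetic differences). Your restriction of the sum identity to $L$ not fixing $I$ is not a gap: the \emph{moreover} clause as literally stated fails for such $L$ (e.g.\ $L=S$ gives $I+S=I\neq J=J+S$), and the paper's own proof establishes exactly the same restricted version, which is all that the covering argument requires.
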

\begin{proof}
Take the pair $(a,b+t)$ with $t=(a-1)-(2b+1)$. By hypothesis, $2b+1\le (a-1)$, and so $t\in\mathbb{N}$. Let $(u,v)$ be the Kunz coordinates of $S$. Notice that: 
\begin{enumerate}
  \item $a\le u$ (first inequality if \eqref{eq:kunz-cond}, for $(x,y)=(a,b)$);
  \item $b+t=(a-1)-(b+1)\le v$ if and only if $a-1\le v+b+1$; but $a-1\le a\le b+v+1$ (last inequality of \eqref{eq:kunz-cond});
  \item observe that $b+t=(a-1)-(b+1)\le a+u$ which trivially holds (as $a\leq u$);
  \item $b+t+v+1\ge a$, since $b+t+v+1\ge b+v+1\ge a $ (last inequality in \eqref{eq:kunz-cond}).
\end{enumerate}
Thus, $(a,b+t)$ are the Kunz coordinates of a normalized ideal of $S$, say $L$; whence $L=(a,b+t)_{\mathcal{K}}$. Then,
\[
I+L=(\min\{a,2b+t+1\},\min\{b,b+t,2a\})_{\mathcal{K}}=(a-1,b)_{\mathcal{K}}=J.
\]
This proves $I\preceq J$. Notice that by \eqref{eq:genus-ideal-kunz}, $|J \setminus I|= (a+b)-(a+b-1)=1$ (as a matter of fact, $J\setminus I = \{3a+1\}$). Suppose that there exists $K$ with $I\precneqq K\precneqq J$. Then, $I\subsetneq K\subsetneq J$, and this would lead to $|J\setminus I|\ge 2$, a contradiction. Hence, $J$ covers $I$.

Next, we prove that if $I\precneqq L$ for some $L\in \Ni$, then $J\preceq L$. Write $L=(c,d)_{\mathcal{K}}$. As $I\prec L$, there exists $(x,y)_{K}$ fulfilling \eqref{eq:kunz-cond} such that $(a,b)_{\mathcal{K}}+(x,y)_{\mathcal{K}}=(c,d)_{\mathcal{K}}$, and $c=\min\{a,x,b+y+1\}$ and $d=\min\{b,y,a+x\}=\min\{b,y\}$ (by hypothesis $b<a$, and so $b< x+a$). We distinguish two cases depending on $x<a$ or $x\ge a$.
\begin{itemize}
  \item $x<a$. In this setting $(a-1,b)_{\mathcal{K}}+(x,y)_{\mathcal{K}}=(c,d)_{\mathcal{K}}$, since $\min\{a-1,x,b+y+1\}=\min\{x,b+y+1\}=\min\{a,x,b+y+1\}$.
  \item $x\ge a$. If $a\le b+y+1$, then $2b+1< a\le b+y+1$ and consequently $b\le y$. Under these conditions, $(a,b)_{\mathcal{K}}+(x,y)_{\mathcal{K}}=(a,b)_{\mathcal{K}}=(c,d)_{\mathcal{K}}$, which is impossible, as we are assuming that $I\neq L$. Thus, $a>b+y+1$, and so $a-1\ge b+y+1$. In this case, $(a-1,b)_{\mathcal{K}}+(x,y)_{\mathcal{K}}=(a,b)_{\mathcal{K}}+(x,y)_{\mathcal{K}}=(c,d)_{\mathcal{K}}$, since $\min\{a-1,x,b+y+1\}=b+y+1=\min\{a,x,b+y+1\}$.
\end{itemize}
This concludes the proof, since in all the possible cases $J\preceq L$. Notice that we have also shown that if $I+M=L$, then $J+M=L$, which proves the second part of the statement.
\end{proof}

\begin{lemma}\label{lem:m3-case-2}
  Let $S$ be a numerical semigroup with multiplicity three and let $I=(a,b)_{\mathcal{K}}$ be an ideal of $S$ such that $2a<b$. Then, $J =(a,b-1)_{\mathcal{K}}$ is the unique cover of $I$ with respect to $\preceq$.
Moreover, $I+L=J+L$ for all $L\in \Ni$. 
\end{lemma}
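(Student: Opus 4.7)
The argument is entirely parallel to that of Lemma~\ref{lem:m3-case-1}, obtained by swapping the roles of the two Kunz coordinates: the hypothesis $2a<b$ mirrors $2b+1<a$ and the cover $J=(a,b-1)_{\mathcal{K}}$ now moves along the second coordinate. First I would check that $(a,b-1)$ satisfies the four conditions in \eqref{eq:kunz-cond}: the bounds $a\le u$ and $b-1\le v$ are inherited from $I$; the inequality $a+u\ge b-1$ is a weakening of the third condition for $I$; and $(b-1)+v+1=b+v\ge a$ follows from $b>a$ (a consequence of $2a<b$) together with $v\ge 0$. Hence $J\in \Ni$.

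Next, guided by the symmetry with Lemma~\ref{lem:m3-case-1}, I would set $s=(b-1)-2a\in\mathbb{N}$ and take $L=(a+s,b)_{\mathcal{K}}$. The ideal $L$ is valid: $a+s=b-1-a\le u$ follows from $u\ge b-a$ (a rewriting of $a+u\ge b$), the inequality $(a+s)+u\ge b$ becomes $u\ge a+1$ which holds because $u\ge b-a>a$ (using $b>2a$), and the remaining two conditions are immediate. Then \eqref{eq:kunz-sum-ideals} gives
\[
I+L=\bigl(\min\{a,a+s,2b+1\},\min\{b,b,2a+s\}\bigr)=(a,b-1)_{\mathcal{K}}=J,
\]
since $s\ge 0$, $a\le 2b$, and $2a+s=b-1$. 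Thus $I\preceq J$.

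For the cover property, \eqref{eq:genus-ideal-kunz} gives $|\mathbb{N}\setminus I|=a+b$ and $|\mathbb{N}\setminus J|=a+b-1$, so $|J\setminus I|=1$ and no ideal can lie strictly between $I$ and $J$ for $\subseteq$, let alone for $\preceq$. For uniqueness, I would assume $I\precneqq L$ with $L=(c,d)_{\mathcal{K}}$ and witness $(x,y)_{\mathcal{K}}$, so $c=\min\{a,x,b+y+1\}$ and $d=\min\{b,y,a+x\}$. Since $b>a$, $b+y+1>a$ and hence $c=\min\{a,x\}$. I then split on $y$: if $y<b$, then $y\le b-1$ and \eqref{eq:kunz-sum-ideals} directly yields $(a,b-1)_{\mathcal{K}}+(x,y)_{\mathcal{K}}=(c,d)_{\mathcal{K}}$; if $y\ge b$, the sub-case $a+x\ge b$ would force $x\ge a$ (otherwise $a+x<2a<b$), and hence $L=(a,b)_{\mathcal{K}}=I$, contradicting the assumption, so $a+x\le b-1$, and a short computation again yields $(a,b-1)_{\mathcal{K}}+(x,y)_{\mathcal{K}}=(c,d)_{\mathcal{K}}$. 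Thus $J\preceq L$ with the very same witness, which simultaneously establishes the ``moreover'' part of the statement.

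The main obstacle is the case analysis in Step~4 when $y\ge b$: the asymmetric ``$+1$'' in \eqref{eq:kunz-sum-ideals} (the term $b+y+1$ versus $a+x$) forces a careful disentangling of the comparison between $a+x$ and $b$, and one of the resulting sub-cases must be ruled out as incompatible with $L\neq I$.
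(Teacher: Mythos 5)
Your proof is correct and follows essentially the same route as the paper's: the same witness $L=(a+t,b)_{\mathcal{K}}$ with $t=b-1-2a$, the same genus count via \eqref{eq:genus-ideal-kunz} for the cover property, and the same case split on $y<b$ versus $y\ge b$ for uniqueness (your preliminary direct check that $(a,b-1)$ satisfies \eqref{eq:kunz-cond} is harmless but redundant, since the computation $J=I+L$ already shows $J\in\Ni$). Be aware that, exactly as in the paper's own proof, your argument only establishes the ``moreover'' clause for those $L$ with $I+L\neq I$ --- taken literally it fails for $L=S$, where $I+S=I\neq J=J+S$ --- but this weaker form is what is actually invoked in the later applications.
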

\begin{proof}
Since $2a<b$ then $2a\le b-1$. Set $t=b-1-2a\in \mathbb{N}$. Let us consider the pair $(a+t,b)$ and let us check that it fulfills the inequalities \eqref{eq:kunz-cond}, for $(u,v)_{S}$ a generic element in $S$. Observe that
  \begin{enumerate}
    \item $a+t\le u$ if and only if $b-1-a\le u$, which is equivalent to $b\le a+u+1$. By \eqref{eq:kunz-cond}, third inequality, we know that $b\le a+u$, and so $a+t\le u$ holds;
    \item $b\le v$, which holds by the second inequality of \eqref{eq:kunz-cond}, as $I=(a,b)_{\mathcal{K}}$ is an ideal;
    \item $a+t+u\ge a+u\ge b$, in light of the third inequality of \eqref{eq:kunz-cond} applied to $I$;
    \item $b+v+1\ge a+t$ if and only if $b+v+1\ge b-1-a$, equivalent to $v+1\ge -1-a$, which trivially holds.
  \end{enumerate}

  Thus, there exists $L\in \Ni$ such that $L=(a+t,b)_{\mathcal{K}}$. Moreover,
  \[
  I+L=(a+t,b)_{\mathcal{K}}+(a,b)_{\mathcal{K}}=(\min\{a,a+t,2b+1\}, \min\{b,b-1\})_{\mathcal{K}}=(a,b-1)_{\mathcal{K}}=J.
  \]
  In particular, $I\preceq J$, $J\setminus I=\{3b+2\}$ and so $J$ covers $I$ (arguing as in the proof of Lemma~\ref{lem:m3-case-1}).
  
  Now, we show that for every $L=(c,d)_{\mathcal{K}}\in \Ni$ with $I\precneqq L$, we have that $J\preceq L$. As in the proof of Lemma~\ref{lem:m3-case-1}, there exists $(x,y)$ fulfilling \eqref{eq:kunz-cond} such that $(a,b)_{\mathcal{K}}+(x,y)_{\mathcal{K}}=(c,d)_{\mathcal{K}}$. In particular, $c=\min\{a,x,b+y+1\}=\min\{a,x\}$ (by hypothesis $a<b$, and so $a<b+y+1$) and $d=\min\{b,y,a+x\}$. We distinguish between $y<b$ and $y\ge b$.
  \begin{itemize}
    \item $y<b$. In this setting, $\min\{b-1,y,a+x\}=\min\{y,a+x\}=\min\{b,y,a+x\}$, and so  $(a,b-1)_{\mathcal{K}}+(x,y)_{\mathcal{K}}=(c,d)_{\mathcal{K}}$.
    \item $y\ge b$. If $b\le a+x$, then $2a<b\le a+x$, and $a<x$. In this case, $(a,b)_{\mathcal{K}}+(x,y)_{\mathcal{K}}=(a,b)_{\mathcal{K}}=(c,d)_{\mathcal{K}}$, contradicting that $I\neq L$. Hence, $b>a+x$, which leads to $b-1\ge a+x$. It follows that $(a,b-1)_{\mathcal{K}}+(x,y)_{\mathcal{K}}=(a,b)_{\mathcal{K}}+(x,y)_{\mathcal{K}}=(c,d)_{\mathcal{K}}$, because under the standing hypothesis, $\min\{b-1,y,a+x\}=a+x=\min\{b,y,a+x\}$.
  \end{itemize}
  In any case, $J\preceq L$ and this concludes the proof.
\end{proof}

By combining the two previous lemmas, we obtain the following.

\begin{proposition}\label{prop:m3-non-idemp-one-cover}
  Let $S$ be a numerical semigroup with multiplicity three, and let $I\in \Ni\setminus \Os$. Then, $I$ has a unique cover with respect to $\preceq$, which we denote by $I^c$. Moreover, $I+J=I^c+J$ for all $J\in \Ni$.
\end{proposition}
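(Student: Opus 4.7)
The plan is to reduce the proposition directly to the two preceding lemmas via the dichotomy given in Remark~\ref{rem:two-cases-non-idempotent}. Write $I=(a,b)_{\mathcal{K}}$, which is possible since $S$ has multiplicity three. Because $I\in\Ni\setminus\Os$, the ideal $I$ is not idempotent, and Remark~\ref{rem:two-cases-non-idempotent} tells us that exactly one of the two conditions $2b+1<a$ or $2a<b$ holds (the remark explicitly rules out both simultaneously).

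In the first case $2b+1<a$, apply Lemma~\ref{lem:m3-case-1}: the ideal $I^c:=(a-1,b)_{\mathcal{K}}$ is the unique cover of $I$ with respect to $\preceq$, and moreover $I+J=I^c+J$ for every $J\in\Ni$. In the second case $2a<b$, apply Lemma~\ref{lem:m3-case-2}: here $I^c:=(a,b-1)_{\mathcal{K}}$ is the unique cover of $I$, and again $I+J=I^c+J$ for every $J\in\Ni$. In either case the two claims of the proposition hold simultaneously for the same element $I^c$, so we may define $I^c$ unambiguously as this unique cover.

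I expect no real obstacle here, since the substantive verifications (that $I^c$ is an ideal, that it covers $I$, that every strict upper bound of $I$ also dominates $I^c$, and that the equality $I+J=I^c+J$ holds for all $J\in\Ni$) have all been carried out in Lemmas~\ref{lem:m3-case-1} and~\ref{lem:m3-case-2}. The only point worth stating explicitly is that the two cases are mutually exclusive, which guarantees that the cover $I^c$ is well defined and not subject to conflicting descriptions.
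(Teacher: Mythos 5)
Your proposal is correct and is essentially the paper's own argument: the paper derives this proposition by simply combining Lemma~\ref{lem:m3-case-1} and Lemma~\ref{lem:m3-case-2} via the dichotomy of Remark~\ref{rem:two-cases-non-idempotent}, exactly as you do. Your explicit remark that the two cases are mutually exclusive (so $I^c$ is well defined) is a sensible touch that the paper leaves implicit.
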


Non-idempotent ideals have a unique cover; moreover, a stronger fact holds, namely we can also prove that two different non-idempotent ideals cannot share the same cover.

\begin{lemma}\label{lem:m3-non-indemptent-same-cover}
  Let $S$ be a numerical semigroup with multiplicity three, and let $I,J\in \Ni\setminus \Os$. If $I^c=J^c$, then $I=J$.
\end{lemma}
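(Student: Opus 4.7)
The plan is to use the explicit descriptions of $I^c$ provided by Lemmas~\ref{lem:m3-case-1} and \ref{lem:m3-case-2} and split into cases according to which branch of Remark~\ref{rem:two-cases-non-idempotent} each of $I,J$ belongs to. Write $I=(a_1,b_1)_{\mathcal{K}}$ and $J=(a_2,b_2)_{\mathcal{K}}$. Since $I,J\notin\Os$, each ideal satisfies exactly one of the two mutually exclusive inequalities of Remark~\ref{rem:two-cases-non-idempotent}: either $2b_i+1<a_i$ (call this the ``left case'') or $2a_i<b_i$ (the ``right case''). This partitions the argument into three sub-cases.

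If $I$ and $J$ are both in the left case, Lemma~\ref{lem:m3-case-1} gives $I^c=(a_1-1,b_1)_{\mathcal{K}}$ and $J^c=(a_2-1,b_2)_{\mathcal{K}}$; equating Kunz coordinates yields $a_1=a_2$ and $b_1=b_2$, hence $I=J$. The case where both are in the right case is symmetric, using Lemma~\ref{lem:m3-case-2}.

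The only substantive sub-case is the mixed one: suppose $I$ is in the left case and $J$ in the right case (the opposite is symmetric). Then $I^c=(a_1-1,b_1)_{\mathcal{K}}$ while $J^c=(a_2,b_2-1)_{\mathcal{K}}$, so the assumption $I^c=J^c$ forces $a_2=a_1-1$ and $b_2=b_1+1$. The plan now is to derive a contradiction by feeding these identifications back into the defining inequalities of each case. The right-case inequality $2a_2<b_2$ for $J$ becomes $2(a_1-1)<b_1+1$, i.e.\ $b_1>2a_1-3$; combined with the left-case inequality $2b_1+1<a_1$ for $I$, I would chain
\[
a_1>2b_1+1>2(2a_1-3)+1=4a_1-5,
\]
so $3a_1<5$ and hence $a_1\le 1$. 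But then $2b_1+1<a_1\le 1$ gives $b_1<0$, contradicting $b_1\in\mathbb{N}$.

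I expect the mixed sub-case to be the only real obstacle, and even there the hard part is simply noticing that the two defining inequalities pull the coordinates in opposite directions; once this is written out, the arithmetic is immediate. The same-case sub-cases are purely bookkeeping via the formulas in Lemmas~\ref{lem:m3-case-1} and \ref{lem:m3-case-2}.
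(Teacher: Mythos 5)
Your proposal is correct and follows essentially the same route as the paper: split into cases according to which branch of Remark~\ref{rem:two-cases-non-idempotent} each ideal satisfies, read off the covers from Lemmas~\ref{lem:m3-case-1} and~\ref{lem:m3-case-2}, and kill the mixed case by an arithmetic contradiction (the paper bounds the second coordinate, $d\le 1$, whereas you bound the first, $a_1\le 1$, but both chains combine the same two defining inequalities and both are valid).
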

\begin{proof}
  Let $(a,b)$ and $(c,d)$ be the Kunz coordinates of $I$ and $J$, respectively. Recall that by Remark~\ref{rem:two-cases-non-idempotent} either $2b+1<a$ or $2a<b$ (not both) and either $2d+1<c$ or $2c<d$. 
  \begin{itemize}
    \item If $2b+1<a$, then by Lemma~\ref{lem:m3-case-1}, $I^c=(a-1,b)_{\mathcal{K}}$. If $2d+1<c$, then $J^c=(c-1,d)_{\mathcal{K}}$. Thus, if $I^c=J^c$, we deduce that $a=c$ and $b=d$, and so $I=J$. Suppose that $2c<d$. Then, by Lemma~\ref{lem:m3-case-2}, $J^c=(c,d-1)_{\mathcal{K}}$; whence $a-1=c$ and $d-1=b$. As $2b+1<a$, we deduce $2d-2+1<c+1$, or equivalently, $2d<c+2$, and so $4d<2c+4<d+4$, which implies $d\leq 1$. If $d = 1$, then $c = 0$ (as $2c<d$), hence $a = 1$ and $b = 0$, which is impossible as $2b+1<a$. Similarly, also $d=0$ is impossible as $2c < d$. Therefore, this can never be the case.
    \item If $2a<b$ and $2c<d$, then by Lemma~\ref{lem:m3-case-2}, $I^c=(a,b-1)_{\mathcal{K}}=(c,d_1)_{\mathcal{K}}=J^c$, which clearly leads to $I=J$. If $2d+1<c$, then we can argue as in the previous case. 
  \end{itemize}
  This covers all possible cases, and concludes the proof.
\end{proof}

\begin{lemma}\label{lem:m3-join-non-idempotent}
  Let $S$ be a numerical semigroup with multiplicity three and $I=\Ni\setminus \Os$. If $J\npreceq I$ and $J\precneqq I^{c}$, then $I\vee J=I+J$.
\end{lemma}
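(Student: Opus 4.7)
By the symmetry between the two cases of Remark~\ref{rem:two-cases-non-idempotent}, I restrict attention to Case 1: $I=(a,b)_{\mathcal{K}}$ with $2b+1<a$, so Lemma~\ref{lem:m3-case-1} gives $I^{c}=(a-1,b)_{\mathcal{K}}$; the case $2a<b$ follows analogously from Lemma~\ref{lem:m3-case-2}. Write $J=(c,d)_{\mathcal{K}}$. Since $J\preceq I^{c}$ implies $J\subseteq I^{c}$, \eqref{eq:inclusion-kunz-coord} yields $c\geq a-1$ and $d\geq b$. The hypotheses $J\npreceq I$ and $J\precneqq I^{c}$ let me apply Lemma~\ref{lemma: poset with unique cover}(2) with $x=I,\,y=J$, giving $I\vee J=I^{c}$. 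Therefore the statement reduces to the single equality $I+J=I^{c}$.

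By \eqref{eq:kunz-sum-ideals}, $I+J=(e,f)_{\mathcal{K}}$ with $e=\min\{a,c,b+d+1\}$ and $f=\min\{b,d,a+c\}$. The second coordinate is immediate: $d\geq b$ and $a+c\geq a+(a-1)>b$ (using $a-1\geq 2b+1$), so $f=b$. Thus everything rests on showing $e=a-1$, which splits into two inequalities.

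For $e\leq a-1$, suppose instead $e=a$. Then all three entries $a,c,b+d+1$ are $\geq a$, so $c\geq a$ and $d\geq a-b-1$, giving $I+J=(a,b)_{\mathcal{K}}=I$; then $J\preceq I+J=I$, contradicting $J\npreceq I$. For $e\geq a-1$, since $c\geq a-1$ this amounts to $b+d+1\geq a-1$, i.e.\ $d\geq a-b-2$. I argue this by contradiction: assume $d\leq a-b-3$. Invoking $J\preceq I^{c}$, pick $L=(x,y)_{\mathcal{K}}\in\Ni$ with $J+L=I^{c}$. By \eqref{eq:kunz-sum-ideals} this reads $\min\{c,x,d+y+1\}=a-1$ and $\min\{d,y,c+x\}=b$. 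The first equation gives $d+y+1\geq a-1$, whence $y\geq a-d-2\geq b+1>b$; since also $c+x\geq c\geq a-1>b$, the second equation then forces $d=b$. So $J=(c,b)_{\mathcal{K}}$ with $c\geq a$ (the alternative $c=a-1$ would give $J=I^{c}$, ruled out by strictness).

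To close the contradiction, I exhibit $L'=(a,a-b-1)_{\mathcal{K}}$ as an element of $\Ni$: its four inequalities in \eqref{eq:kunz-cond} all follow from the corresponding ones for $I$ (notably $v\geq a-b-1$ is read off $b+v+1\geq a$, and $v\geq b$ already holds). A direct application of \eqref{eq:kunz-sum-ideals} then gives $J+L'=(a,b)_{\mathcal{K}}=I$, so $J\preceq I$, contradicting the remaining hypothesis. Hence $d\geq a-b-2$, so $e=a-1$ and $I+J=(a-1,b)_{\mathcal{K}}=I^{c}=I\vee J$.

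The main obstacle is the lower bound $d\geq a-b-2$: the abstract relation $J\preceq I^{c}$ does not give it on its own, and the argument must weave both hypotheses together — the witness $L$ coming from $J\preceq I^{c}$ pins $d$ down to $b$, and then the explicit ideal $L'=(a,a-b-1)_{\mathcal{K}}$ (whose existence is encoded precisely in the ideal inequalities of $I$) turns this into a violation of $J\npreceq I$.
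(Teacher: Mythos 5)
Your proof is correct, but it takes a genuinely different route from the paper's. Both arguments begin the same way, using Lemma~\ref{lemma: poset with unique cover}(2) to get $I\vee J=I^{c}$ and thereby reducing everything to the single identity $I+J=I^{c}$. From there the paper stays coordinate-free: it takes a saturated chain $J=J_1\prec\dots\prec J_t=I^{c}$ of covers, uses Lemma~\ref{lem:m3-non-indemptent-same-cover} to show the chain must pass through an idempotent (otherwise $J_{t-1}=I$ and $J\preceq I$), then combines Lemma~\ref{lem:join-with-idempotent} with repeated applications of the identity $I+L=I^{c}+L$ from Proposition~\ref{prop:m3-non-idemp-one-cover} to conclude $J+I=J+I^{c}=I^{c}$. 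You instead compute $I+J$ explicitly in Kunz coordinates: the inclusion $J\subseteq I^{c}$ gives the easy bounds, the hypothesis $J\npreceq I$ rules out $I+J=I$, and the one remaining inequality ($d\ge a-b-2$) is forced by a nice two-step witness argument --- the witness $L$ for $J\preceq I^{c}$ pins $d=b$, and then the explicit ideal $L'=(a,a-b-1)_{\mathcal{K}}$ (correctly verified against \eqref{eq:kunz-cond}) yields $J+L'=I$, violating $J\npreceq I$. The paper's approach has the advantage of reusing machinery already needed for Theorem~\ref{th: m=3 is a lattice} and of not splitting into cases; yours is more self-contained and makes the sum completely explicit, at the cost of having to treat the case $2a<b$ separately --- which you only assert is ``analogous''. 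Since \eqref{eq:kunz-sum-ideals} is not literally symmetric in the two coordinates (the $+1$ sits only in the first entry), that case does deserve a check; it does go through, with the roles of the coordinates swapped and the witness $L'=(b-a,b)_{\mathcal{K}}$ in place of $(a,a-b-1)_{\mathcal{K}}$, so the claim of analogy is justified, but in a written version you should at least record the modified witness.
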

\begin{proof}
Observe that the fact that $I\vee J$ exists follows from Lemma~\ref{lemma: poset with unique cover}-(2).
As $J\precneqq I^c$, there exists $J_1,\dots,J_t\in \Ni$ such that $J_1=J$, $J_t=I^c$ and $J_{i+1}$ covers $J_i$ for all $i\in \{1,\dots,t-1\}$. As $J\neq I^c$, we have that $t>1$.

If $J_i\not\in \Os$ for all $i$, then we have that $J_{t-1}^c=I^c$. By Lemma~\ref{lem:m3-non-indemptent-same-cover}, $J_{t-1}=I$, but then $J\preceq J_{t-1}=I$, contradicting $J\not\preceq I$. 

Therefore, $J_i\in \Os$ for some $i\in \{1,\dots,t\}$. Let $i$ be minimum with this property. By Lemma~\ref{lem:join-with-idempotent}, $J_i\vee I^c=J_i+I^c=I^c$. Also, as $J\not\preceq I$, we deduce that $J_i\not\preceq I$. By Lemma \ref{lemma: poset with unique cover}-(2), $J\vee I=J_i\vee I=I^c$. Notice also that $J_j\in \Ni\setminus \Os$ for all $j<i$. By applying several times Proposition~\ref{prop:m3-non-idemp-one-cover}, we obtain $J+I^c=J_1+I^c=\dots=J_i+I^c$. Thus, $J+I^c=I^c=J\vee I$. By using again Proposition~\ref{prop:m3-non-idemp-one-cover}, but now with $I$, $J+I^c=J+I$ and we conclude that $I\vee J=I^c=I+J$.
\end{proof}

With all these technical lemmas we are ready to prove that $(\Ni,\preceq)$ is always a lattice for $S$ a numerical semigroup with multiplicity three. Also, in this case, we can give an explicit formula for the description of two ideals.
 
\begin{theorem}\label{th: m=3 is a lattice}
  Let $S$ be a numerical semigroup with multiplicity three. Then, $(\Ni,\preceq)$ is a lattice. Moreover, for  $I,J\in \Ni$,
  \[
    I\vee J=\begin{cases}
      I\cup J, \text{ if } I \text{ and } J \text{ are comparable},\\
      I+J,\text{ otherwise,}
    \end{cases}
  \]  
  and 
   \[
    I\wedge J=\begin{cases}
      I\cap J, \text{ if } I \text{ and } J \text{ are comparable},\\
      \bigvee ({\downarrow}I \cap{\downarrow}J),\text{ otherwise.}
    \end{cases}
  \]
\end{theorem}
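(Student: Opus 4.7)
My plan is to establish $(\Ni,\preceq)$ as a join semilattice with zero, then apply Theorem~\ref{th: Nation Th: 2.4} to obtain both the lattice structure and the meet formula in one stroke. The zero is $S$ itself, since $S+I=I$ yields $S\preceq I$ for every $I\in\Ni$, so only the existence of binary joins needs to be verified.

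If $I$ and $J$ are comparable, say $I\preceq J$, then $I\vee J=J$, and since $I\preceq J$ implies $I\subseteq J$ we have $J=I\cup J$, matching the stated formula. For incomparable $I,J$ I claim $I\vee J=I+J$. First, $I+J$ is an upper bound of $\{I,J\}$: taking witness $K=J$ in the definition of $\preceq$ gives $I\preceq I+J$, and symmetrically $J\preceq I+J$. To show it is the least upper bound, I reduce to the case where one argument is idempotent. If $I\in\Os$, Lemma~\ref{lem:join-with-idempotent} gives $I\vee J=I+J$ at once. If $I\notin\Os$, Proposition~\ref{prop:m3-non-idemp-one-cover} supplies a unique cover $I^c$ and the key identity $I+K=I^c+K$ for every $K\in\Ni$. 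Iterating $I\mapsto I^c$ produces a strictly ascending chain in the finite poset $\Ni$, which must therefore terminate at some $\tilde I\in\Os$ (at the latest at $\mathbb{N}$, which is idempotent and the top of $\Ni$); telescoping the identity gives $I+K=\tilde I+K$ for every $K\in\Ni$.

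The core observation is that the upper bounds of $\{I,J\}$ coincide with those of $\{\tilde I,J\}$: any $K$ with $I\preceq K$ admits $L$ with $I+L=K$, whence $\tilde I+L=K$ and so $\tilde I\preceq K$; the reverse containment is immediate since $I\preceq\tilde I$. Applying Lemma~\ref{lem:join-with-idempotent} to $\tilde I$ yields $\tilde I\vee J=\tilde I+J$, and therefore $I\vee J=\tilde I\vee J=\tilde I+J=I+J$, completing the verification that $(\Ni,\preceq)$ is a join semilattice with zero.

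Once this is established, Theorem~\ref{th: Nation Th: 2.4} delivers that $(\Ni,\preceq)$ is a lattice and that $I\wedge J=\bigvee({\downarrow}I\cap{\downarrow}J)$, which handles the incomparable case of the meet. For comparable $I,J$ with $I\preceq J$, the meet is $I$, and this coincides with $I\cap J$ because $I\subseteq J$. The most delicate step of the argument is the preservation of upper bounds under the replacement $I\mapsto\tilde I$; this is where Proposition~\ref{prop:m3-non-idemp-one-cover} carries all the weight, and I expect it to be the main obstacle, since without the identity $I+K=I^c+K$ one could not relate the upper bounds of $\{I,J\}$ to those of the idempotent pair $\{\tilde I,J\}$, and the reduction to Lemma~\ref{lem:join-with-idempotent} would fail.
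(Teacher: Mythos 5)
Your proof is correct, and its skeleton is the same as the paper's: handle comparable pairs directly, use Lemma~\ref{lem:join-with-idempotent} when one ideal is idempotent, and otherwise climb the chain of unique covers supplied by Proposition~\ref{prop:m3-non-idemp-one-cover}. The one genuine difference is in the endgame. The paper stops the cover chain as soon as either $I^{c_n}\in\Os$ or $J\preceq I^{c_n}$, and the second alternative requires a separate argument (Lemma~\ref{lem:m3-join-non-idempotent}, which in turn leans on Lemma~\ref{lem:m3-non-indemptent-same-cover}); you instead always run the chain all the way to an idempotent $\tilde I$ and transfer upper bounds via the telescoped identity $I+L=\tilde I+L$, which gives ${\uparrow}\{I,J\}={\uparrow}\{\tilde I,J\}$ directly and makes the case $J\preceq I^{c_k}$ invisible. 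This is a real (if modest) simplification: it renders Lemma~\ref{lem:m3-join-non-idempotent} and the incomparability hypothesis of Lemma~\ref{lemma: poset with unique cover} unnecessary for the theorem, at the cost of nothing --- the only facts you use are that each non-idempotent has a unique cover satisfying $I+K=I^c+K$ for all $K$, that the poset is finite with idempotent top $\mathbb{N}$, and Nation's Theorem~\ref{th: Nation Th: 2.4} for the meet.
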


\begin{proof}
  Let $I,J\in \Ni$. We show that there exists $I\vee J$. If $I\preceq J$, then $I\vee J=J=I\cup J$. Analogously, if $J\preceq I$, then $I\vee J=I=I\cup J$. So, we can suppose that $I$ and $J$ are not comparable with respect to $\preceq$. 

  If $I\in \Os$, then by Lemma~\ref{lem:join-with-idempotent}, $I\vee J=I+J$.

  If $I\in \Ni\setminus\Os$, then there is a unique cover of $I$ with respect to $\preceq$ (by Proposition \ref{prop:m3-non-idemp-one-cover}). Let $I^c$ be this unique cover. For any $J\in\Ni$, not comparable with $I$, then ${\uparrow}\{I, J\} = {\uparrow}\{I^c, J\}$ (by Lemma~\ref{lemma: poset with unique cover}). 
 Thus, $I\vee J$ exists if and only if $I^c\vee J$ exists, and if so, $I\vee J=I^c \vee J$. Also, by Proposition~\ref{prop:m3-non-idemp-one-cover}, $I+J=I^c+J$. We repeat this process for finitely many steps, thus producing an ascending chain $I\prec I^{c_1}\prec\dots\prec I^{c_n}$ of covers (more precisely, $I^{c_i}$ covers $I^{c_{i-1}}$, for every $i\in\{i,\dots n\}$) until either $I^{c_n}\in \Os$ or $J\preceq I^{c_n}$. 
 
 In the first case, by Lemma~\ref{lem:join-with-idempotent} we have $I^{c_n} \vee J = I^{c_n} + J =  I^{c_{n-1}} + J = \dots = I^{c_1} + J = I + J$, where all the right-side equalities follow by Proposition \ref{prop:m3-non-idemp-one-cover}.  By Lemma \ref{lemma: poset with unique cover}, applied several times, $I\vee J=I^{c_n}\vee J$, and we conclude $I\vee J=I+J$.
 
 In the latter case, $J\preceq I^{c_n}$. Obviously, $J\neq I^{c_n}$ (as otherwise $I\prec J$). Hence, by Lemma \ref{lem:m3-join-non-idempotent}, $I^{c_{n-1}}\vee J =I^{c_{n-1}} + J = \dots = I + J $, where the right-hand side equalities follows from Proposition \ref{prop:m3-non-idemp-one-cover} (where we are assuming that $I^{c_{i}}\in \Ni\setminus \Os$, for all $i\in\{1,\dots, n\}$ as otherwise we would be in the previous case). By Lemma~\ref{lemma: poset with unique cover}-(1), $I\vee J= I^{c_1}\vee J=\dots= I^{c_{n-1}}\vee J$. Putting all this together, $I+J=I^{c_{n-1}}\vee J=I^{c_n}\vee J=I\vee J$.

The description of $I\wedge J$ follows from Theorem \ref{th: Nation Th: 2.4}. Observe that, in the case $I$ and $J$ are comparable, that is, $I\preceq J$, which implies $I\subseteq J$, we have that $I\wedge J = \bigvee({\downarrow}I\cap{\downarrow}J) = \bigvee({\downarrow}I) = I$ (similarly for the case $J\preceq I$).
\end{proof}

For multiplicity four, it is no longer true that a normalized ideal $I$ that is not idempotent has a unique cover. Thus, we cannot take advantage of the same strategy used in the case of multiplicity three.

\begin{example}
 Let $S=\langle 4,7,9,10\rangle$, and let $I=(2,2,0)_{\mathcal{K}}$. Then, $I+I\neq I$ and the ideals covering $I$ are $(1,2,0)_{\mathcal{K}}$, $(2,1,0)_{\mathcal{K}}$, and $(0,2,0)_{\mathcal{K}}$.
\end{example}

\section{Quarks of the poset of normalized ideals}

Let $S$ be a numerical semigroup with multiplicity three. We already know that if $S$ is irreducible, then $\Ni$ has at most two quarks \cite[Theorem~5.21]{apery-icm}. Moreover, if $S$ is symmetric we know that the only quark of $S$ is $\{0,\operatorname{F}(S)\}+S$ \cite[Proposition~5.18]{apery-icm}, and if $S$ is pseudo-symmetric, then the quarks of $\Ni$ are $\{0,\operatorname{F}(S)\}+S$ and $\{0,\operatorname{F}(S)/2\}+S$ \cite[Proposition~5.20]{apery-icm}.

By \cite[Theorem~7]{multiplicity-3-4}, every numerical semigroup with multiplicity three is uniquely determined by its genus and its Frobenius number: $S=\langle 3,3\operatorname{g}(S)-\operatorname{F}(S),\operatorname{F}(S)+3\rangle$. 
By \cite[Remark~5.1]{apery-icm}, the genus of $S$ plus one equals the length of the largest ascending chain in $(\Ni,\preceq)$. If $S$ is symmetric, then its Frobenius number is twice the genus minus one (see for instance \cite[Corollary~4.5]{ns}), while if $S$ is pseudo-symmetric, the Frobenius number of $S$ equals twice its genus minus two \cite[Corollary~4.5]{ns}. This means that if $S$ has at most two quarks, we can fully recover $S$ from $(\Ni,\preceq)$.

Let us focus on the case $S$ is not irreducible. Recall that $S$ is irreducible if and only if the cardinality of $\operatorname{SG}(S)$ is at most one \cite[Corollary~4.38]{ns}. We also know that $\operatorname{SG}(S)\subseteq \operatorname{PF}(S)$, and that the type of $S$ is at most three \cite[Corollary~10.22]{ns}. Putting all this together, we deduce that in the case $S$ is not irreducible, the set of special gaps coincides with the set of pseudo-Frobenius elements. Let $\operatorname{SG}(S)=\operatorname{PF}(S)=\{f'<f\}$ (and so $f=\operatorname{F}(S)$). In particular, $S'=S\cup\{f'\}$ is a numerical semigroup, and so is $\overline{S}=S\cup \{f\}$ (notice that $f=\operatorname{F}(S)$ is always a special gap for any numerical semigroup different from $\mathbb{N}$).
 
In the next two lemmas we describe the set of normalized ideals of $S$ that are not ideals of $S'$ and of $\overline{S}$ (compare the first with \cite[Lemma~5.17]{apery-icm}).

\begin{lemma}\label{lem:ideals-not-containing-f}
   Let $S$ be a numerical semigroup with Frobenius number $f$. Then, $\overline{S}=S\cup\{f\}$ is a numerical semigroup, and for every $I\in \Ni$, $I\in \Ni\setminus\Ni[\overline{S}]$ if and only if $f\not\in I$. 
\end{lemma}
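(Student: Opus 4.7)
The plan is to prove the two assertions in order, handling the structural claim about $\overline{S}$ first and then the characterization of normalized ideals.

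For the structural claim, I would use that $f=\operatorname{F}(S)$ is automatically a pseudo-Frobenius number (it is the maximum of $\mathbb{Z}\setminus S$, so $f+s\in S$ for every $s\in S^{*}$), and that trivially $2f>f$ forces $2f\in S$. By the characterization $\operatorname{SG}(S)=\{g\in\operatorname{PF}(S):2g\in S\}$ recalled in the preliminaries, $f\in\operatorname{SG}(S)$, and adjoining a special gap to a numerical semigroup yields a numerical semigroup. Hence $\overline{S}=S\cup\{f\}$ is a numerical semigroup.

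For the equivalence, I would first observe the containment $\Ni[\overline{S}]\subseteq \Ni$: indeed, if $I+\overline{S}=I$, then $S\subseteq \overline{S}$ together with $0\in S$ yields $I+S=I$, and the minimum of $I$ is unchanged. So it suffices to show that a normalized ideal $I$ of $S$ lies in $\Ni[\overline{S}]$ if and only if $f\in I$. The forward direction is immediate, since $0\in I$ gives $f=0+f\in I+\overline{S}=I$. For the converse, assume $f\in I$; since $I+S=I$, it is enough to verify $I+\{f\}\subseteq I$, because then $I+\overline{S}=(I+S)\cup(I+\{f\})\subseteq I$, and the reverse inclusion is trivial. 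Take $i\in I$: if $i=0$ then $i+f=f\in I$ by hypothesis, while if $i>0$ then $i+f>f=\operatorname{F}(S)$, so $i+f\in S$, and since $S\subseteq I$ (normalized ideals always contain $S$, as $S=0+S\subseteq I+S=I$), we get $i+f\in I$.

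The only mildly delicate step is the last one, because elements $i\in I$ need not belong to $S$, and so one cannot directly invoke the defining property $f+s\in S$ for $s\in S^{*}$. The clean way around this is the observation that any positive integer strictly greater than the Frobenius number automatically belongs to $S$, which uniformly handles the case $i>0$ regardless of whether $i$ itself is a gap of $S$.
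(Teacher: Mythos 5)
Your proof is correct and follows essentially the same route as the paper: both arguments reduce the converse direction to showing $f+I\subseteq I$, using that every integer strictly greater than $\operatorname{F}(S)$ lies in $S\subseteq I$. The only (harmless) addition is that you justify that $\overline{S}$ is a numerical semigroup via the special-gap characterization, a fact the paper simply takes as known.
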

\begin{proof}
  We already know that $\overline{S}=S\cup\{f\}$ is a numerical semigroup. Notice that $\overline{S}=\{0,f\}+S$.

  If $f\in I$, then $\overline{S}=S\cup\{f\}\subseteq I$, and $I=I+0\subseteq I+\overline{S}= I+(\{0,f\}+S)=I+\{0,f\}=I\cup (f+I)$. By definition of Frobenius number, $f+I\subseteq f+\mathbb{N}\subseteq S\cup \{f\}=\overline{S}$. Hence, $I\cup(f+I)\subseteq I\cup\overline{S}=I$. Thus, $I\subseteq I+\overline{S}\subseteq I$, and so $I+\overline{S}=I$, which leads to $I\in \Ni[\overline{S}]$ Clearly, if $I\in \Ni[\overline{S}]$, then $f\in \overline{S}\subseteq I$. This proves the claim. 
\end{proof}

\begin{lemma}\label{lem:ideals-not-containing-f2}
Let $S=(k_1,k_2)_{\mathcal{K}}$ be a numerical semigroup with multiplicity three and $\operatorname{PF}(S)=\{f',f\}$ with $f'<f$ and $2f'\neq f$. Then, $S'=S\cup\{f'\}$ is a numerical semigroup. Moreover, the set of ideals of $\Ni\setminus \Ni[S']$ that contain $f'$ is 
\begin{enumerate}
  \item $\{(x_1,x_2)_{\mathcal{K}}\in \Ni : x_1+k_1=x_2, x_1\le k_1-1\}$ if $f'\equiv 1\pmod 3$,
  \item $\{(x_1,x_2)_{\mathcal{K}}\in \Ni : x_2+k_2+1=x_1, x_2\le k_2-1\}$ if $f'\equiv 2\pmod 3$.
\end{enumerate}
\end{lemma}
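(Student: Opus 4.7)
The plan is to reduce the condition $I \in \Ni[S']$ to a finite test on the Apéry generators of $I$, express it in Kunz coordinates, and then describe when the test fails while $f' \in I$ still holds. That $S'$ is a numerical semigroup is already established in the paragraph preceding the lemma: since $\operatorname{PF}(S)$ has cardinality two and $2f' \neq f$ excludes pseudo-symmetry, $S$ is non-irreducible, and for non-irreducible semigroups of multiplicity three one has $\operatorname{SG}(S) = \operatorname{PF}(S)$, so $f' \in \operatorname{SG}(S)$.

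Because $S \subseteq S'$, any $I \in \Ni$ satisfies $I = I + S \subseteq I + S'$, while $I + S' = I \cup (I + f')$; hence $I \in \Ni[S']$ if and only if $I + f' \subseteq I$. Since $I$ is generated as an $S$-ideal by $\Ap(I) = \{0,\, 3x_1+1,\, 3x_2+2\}$ where $(x_1,x_2)$ are the Kunz coordinates of $I$, and $I + S = I$, this is in turn equivalent to $w + f' \in I$ for every $w \in \Ap(I)$, a condition readable off from \eqref{eq:kunz-membership} after reducing $w+f'$ modulo three.

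In the case $f' \equiv 1 \pmod 3$, one has $f' = 3k_1 - 2 = 3(k_1-1)+1$ (since $f' = w_1 - 3$ with $w_1 = 3k_1+1$). Evaluating at the three Apéry generators: $0 + f' \in I$ iff $x_1 \leq k_1 - 1$, which is exactly the hypothesis $f' \in I$; $(3x_1+1) + f' = 3(x_1+k_1-1)+2 \in I$ iff $x_2 \leq x_1+k_1-1$; and $(3x_2+2)+f' = 3(x_2+k_1) \in I$ holds trivially. Combining the Kunz inequality $x_2 \leq x_1+k_1$ from \eqref{eq:kunz-cond} with the negation of the second condition yields exactly $x_2 = x_1 + k_1$, as claimed. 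Symmetrically, in the case $f' \equiv 2 \pmod 3$, one has $f' = 3k_2 - 1 = 3(k_2-1)+2$, and the three evaluations give $0+f' \in I$ iff $x_2 \leq k_2 - 1$; $(3x_1+1)+f' = 3(x_1+k_2) \in I$ trivially; and $(3x_2+2)+f' = 3(x_2+k_2)+1 \in I$ iff $x_1 \leq x_2+k_2$. The Kunz inequality $x_1 \leq x_2+k_2+1$ and the negation of the last condition then force $x_1 = x_2+k_2+1$.

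No serious obstacle is expected: both cases boil down to modular bookkeeping and one application of \eqref{eq:kunz-membership} per Apéry generator, and the only subtlety is identifying the Apéry element of $S$ from which $f'$ arises by subtracting the multiplicity.
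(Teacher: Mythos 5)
Your proof is correct and reaches the statement by the same underlying computation as the paper: in each residue case exactly one membership condition can fail, and combining its negation with the corresponding inequality of \eqref{eq:kunz-cond} forces the stated equality. The only (harmless) difference is that you certify $I\in\Ni[S']$ by testing $w+f'\in I$ on the three Ap\'ery generators of $I$, whereas the paper computes the Kunz coordinates of $S'$ (namely $(k_1-1,k_2)$, resp.\ $(k_1,k_2-1)$) and applies \eqref{eq:kunz-cond} to $S'$; both routes isolate the identical single nontrivial inequality.
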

\begin{proof}
  We have seen already that under the standing hypothesis, $\operatorname{SG}(S)=\{f',f\}$, and so $S'=S\cup\{f'\}$ is a numerical semigroup.
  
  Notice that as $f'\not\in S$, $f'$ is not a multiple of three. Thus, $f'\bmod 3$ is either one or two. Suppose first that $f'\equiv 1 \pmod 3$, and let $k'=(f-1)/3$, that is, $f'=3k'+1$. It easily follows that $k'=k_1-1$ and that the Kunz coordinates of $S'$ are $(k_1-1,k_2)$.
  
  If $I=(x_1,x_2)_{\mathcal{K}}\in \Ni\setminus\Ni[S']$,  then by \eqref{eq:kunz-cond}, $x_1\le k_1$, $x_2\le k_2$, $x_1+k_1\ge x_2$, and $x_2+k_2+1\ge x_1$. The condition $f'\in I$ means that $x_1\le k_1-1$ by \eqref{eq:kunz-membership}. Therefore, as $I\not\in \Ni[S']$, and $(x_1,x_2)$ already fulfills the inequalities $x_1\le k_1-1$ and $x_2+k_2+1\ge x_1$, we deduce that $x_1+(k_1-1)<x_2$. If follows that, $x_1+k_1=x_2$. 
  
  For the other inclusion, if $I=(x_1,x_2)_{\mathcal{K}}\in \Ni$ is such that $x_1+k_1=x_2$ and $x_1\le k_1-1$ hold, then $f'\in I$ (because $x_1\le k_1-1=k'$) and $I\not\in \Ni[S']$, because $x_1+(k_1-1)=x_2-1<x_2$, see \eqref{eq:kunz-cond}.

  The other case follows analogously.
\end{proof}

Next, we see that if $S$ is not irreducible, then it has exactly three quarks.

\begin{proposition}\label{prop:mult-three-non-irreducible-three-quarks}
    Let $S$ be a non-irreducible numerical semigroup with multiplicity  three and Kunz coordinates $(k_1,k_2)$. 
\begin{itemize}
    \item If $k_1\le k_2$, then the quarks of $\Ni$ are $(k_1-1,k_2)_{\mathcal{K}}$, $(k_1,k_2-1)_{\mathcal{K}}$, and $(k_2-k_1,k_2)_{\mathcal{K}}$.
    \item If $k_1> k_2$, then the quarks of $\Ni$ are $(k_1-1,k_2)_{\mathcal{K}}$, $(k_1,k_2-1)_{\mathcal{K}}$, and $(k_1, k_1-k_2-1)_{\mathcal{K}}$.
\end{itemize}
\end{proposition}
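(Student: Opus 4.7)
The plan is to verify that the three listed ideals are quarks and then use Lemmas~\ref{lem:ideals-not-containing-f} and~\ref{lem:ideals-not-containing-f2} to show they are the only ones.

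Since $S$ is non-irreducible with multiplicity three, the paragraph preceding these lemmas gives $\operatorname{SG}(S) = \operatorname{PF}(S) = \{f', f\}$ with $f' < f$, and the Apéry set identifies this with $\{3k_1 - 2, 3k_2 - 1\}$. The two minimal oversemigroups $S \cup \{3k_1 - 2\}$ and $S \cup \{3k_2 - 1\}$ have Kunz coordinates $(k_1 - 1, k_2)$ and $(k_1, k_2 - 1)$, respectively; each differs from $S$ by exactly one element, and since $\preceq$ implies $\subseteq$, any $J$ with $S \precneqq J \preceq S \cup \{g\}$ (for $g$ a special gap) must coincide with $S \cup \{g\}$. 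Hence both ideals are quarks, accounting for the first two entries in either case of the proposition.

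For the third quark, let $Q$ be a quark distinct from $\overline{S} = S \cup \{f\}$ and $S' = S \cup \{f'\}$. If $\overline{S} \preceq Q$ then, since $\overline{S}$ itself is already a cover of $S$, we would have $S \precneqq \overline{S} \precneqq Q$, contradicting that $Q$ covers $S$; so $\overline{S} \not\preceq Q$, and by Lemmas~\ref{lem:oversemigroup-uparrow} and~\ref{lem:ideals-not-containing-f}, $f \notin Q$. The same argument applied to $S'$ yields $Q \notin \Ni[S']$. Consider the case $k_1 \le k_2$, so $f = 3k_2 - 1$ and $f' = 3k_1 - 2$; the condition $f \notin Q$ forces, via \eqref{eq:kunz-membership}, $x_2 = k_2$. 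If also $f' \notin Q$ then $x_1 = k_1$ and $Q = S$, a contradiction; thus $f' \in Q$, and Lemma~\ref{lem:ideals-not-containing-f2}(1) gives $x_1 + k_1 = x_2 = k_2$, whence $x_1 = k_2 - k_1$. Hence $Q = (k_2 - k_1, k_2)_{\mathcal{K}}$. The case $k_1 > k_2$ is symmetric, using Lemma~\ref{lem:ideals-not-containing-f2}(2) and yielding $Q = (k_1, k_1 - k_2 - 1)_{\mathcal{K}}$.

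It remains to verify that this candidate is actually a quark. Take $Q_3 = (k_2 - k_1, k_2)_{\mathcal{K}}$ in the case $k_1 \le k_2$. The four inequalities of \eqref{eq:kunz-cond} are readily checked, the only non-trivial one being $k_2 - k_1 \le k_1$, which follows from $k_2 < 2k_1$ (a consequence of $3k_1 - 2 \in \operatorname{PF}(S)$). By \eqref{eq:inclusion-kunz-coord}, any ideal contained in $Q_3$ has Kunz coordinates $(y_1, k_2)$ with $k_2 - k_1 \le y_1 \le k_1$. For each $y_1$ strictly between these bounds, a putative witness $(y_1, k_2)_{\mathcal{K}} + (l_1, l_2)_{\mathcal{K}} = Q_3$ would require, by \eqref{eq:kunz-sum-ideals}, $l_2 = k_2$ (from the second coordinate) and $l_1 = k_2 - k_1$ (from the first, since $y_1 > k_2 - k_1$); the ideal condition $y_1 + l_1 \ge k_2$ would then reduce to $y_1 \ge k_1$, a contradiction. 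So ${\downarrow} Q_3 = \{S, Q_3\}$ and $Q_3$ is a quark; the case $k_1 > k_2$ is analogous.

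The main obstacle is the asymmetry between $f$ and $f'$: while $f \notin Q$ is equivalent to $\overline{S} \not\preceq Q$ (Lemma~\ref{lem:ideals-not-containing-f}), an ideal may contain $f'$ without being in $\Ni[S']$, so the condition $S' \not\preceq Q$ yields, via Lemma~\ref{lem:ideals-not-containing-f2}, a one-parameter family of candidates; intersecting this with the simpler constraint $f \notin Q$ is precisely what forces the third quark to be unique in each case.
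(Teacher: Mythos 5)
Your proof is correct and follows essentially the same route as the paper: identify the two idempotent quarks as the unitary extensions $S\cup\{f'\}$ and $S\cup\{f\}$, use Lemmas~\ref{lem:oversemigroup-uparrow}, \ref{lem:ideals-not-containing-f} and~\ref{lem:ideals-not-containing-f2} (whose hypothesis $2f'\neq f$ you are tacitly getting from $S$ not being pseudo-symmetric) to pin down the Kunz coordinates of any further quark, and then confirm that the resulting candidate really is one. The only, harmless, divergence is in that last step, where you verify quarkhood of $(k_2-k_1,k_2)_{\mathcal{K}}$ by computing ${\downarrow}Q_3$ directly via \eqref{eq:kunz-sum-ideals}, whereas the paper deduces it from the decomposition $\Ni=\Ni[S']\cup\Ni[\overline{S}]\cup\{S,Q\}$ together with Lemma~\ref{lem:oversemigroup-uparrow}.
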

\begin{proof}
    We know that under the standing hypothesis $\operatorname{SG}(S)=\operatorname{PF}(S)=\{f',f\}$ with $f'<f=\operatorname{F}(S)$. Also, $f'\neq f/2$ \cite[Corollary~4.16]{ns}, because this would mean that $S$ is pseudo-symmetric. 

    By Proposition~4.17 and Lemma~5.10 in \cite{apery-icm}, $S'=S\cup\{f'\}=\{0,f'\}+S$ and $\overline{S}=S\cup\{f\}=\{0,f\}+S$ are quarks in ${(\Ni,\preceq)}$. Since $\operatorname{SG}(S)=\{f',f\}$, $S'$ and $\overline{S}$ are numerical semigroups and so idempotent ideals of $S$ \cite[Proposition~5.14]{apery-icm}.  Thus, by \cite[Proposition~5.13]{apery-icm} the only idempotent quarks of $\Ni$ are $\{0,f'\}+S$ and $\{0,f\}+S$, and both are unitary extensions of $S$. 

    Observe also that from $\operatorname{PF}(S)=-\operatorname{m}(S)+\operatorname{Maximals}_{\le_S}(\operatorname{Ap}(S,3))$ \cite[Proposition~2.20]{ns} and the fact that $\operatorname{Ap}(S,3)=\{0,w_1,w_2\}$ with $w_1,w_2\in S^*$, we deduce that $\operatorname{Ap}(S,3)=\{0,f'+3,f+3\}$ (hence, $f\not\equiv f' \pmod 3$). In particular, by the definition of Kunz coordinates, $S$ has Kunz coordinates equal to $(k_1,k_2)$ with $k_1=(f'+3-1)/3$ and $k_2=(f+3-2)/2$ in the case $f'\equiv 1\pmod 3$ (and thus $f\equiv 2\pmod 3$), of $k_1=(f+3-1)/3+1$ and $k_2=(f'+3-2)/2+1$ in the case $f'\equiv 2\pmod 3$ (and thus $f\equiv 1\pmod 3$). Also, this means that $\{\{0,f'\}+S,\{0,f\}+S\}=\{(k_1-1,k_2)_{\mathcal{K}}, (k_1,k_2-1)_{\mathcal{K}}\}$. 

    With the above notation, we have $S=S'\setminus\{f'\}=\overline{S}\setminus\{f\}$. Observe that $\Ni[S']\cup \Ni[\overline{S}]\subseteq \Ni$. If $I\in \Ni[S']$, then $S'+I=I$ and so $S'\preceq I$; similarly, if $I\in \Ni[\overline{S}]$, then $\overline{S}\preceq I$. Thus, we are interested in the set $\Ni\setminus (\Ni[S']\cup \Ni[\overline{S}])$.

    In order to describe $\Ni\setminus (\Ni[S']\cup \Ni[\overline{S}])$, we distinguish two cases, depending on $(f' \bmod 3, f \bmod 3)$. Recall that $f\not\equiv f' \pmod 3$, and neither $f'$ nor $f$ is a multiple of three. 
    \begin{itemize}
        \item \emph{The case $(f' \bmod 3, f \bmod 3)=(1,2)$}. Set $k'=(f'-1)/3$ and $k=(f-2)/3$. As $f' < f$, we deduce that $k'\le k$. Also, $k'=k_1-1$ and $k=k_2-1$. 
        Let $(x_1,x_2)$ be the Kunz coordinates of an ideal $I\in \Ni\setminus (\Ni[S']\cup \Ni[\overline{S}])$. As $I\in \Ni$, by \eqref{eq:kunz-cond}, $x_2\le k_2$, and as $I\not\in \Ni[\overline{S}])$, by Lemma~\ref{lem:ideals-not-containing-f}, $f\not \in I$ and so $x_2\ge k+1=k_2$ by \eqref{eq:kunz-membership}. Thus, $x_2=k_2$. Also, by \eqref{eq:kunz-cond}, $x_1\le k_1$.
        
         If $f'\not\in I$, then by \eqref{eq:kunz-membership}, $k'=k_1-1<x_1$. Hence, $k_1-1<x_1\le k_1$, which forces $x_1=k_1$, and consequently $I=S$. 
        
        If $f'\in I$, we apply Lemma~\ref{lem:ideals-not-containing-f2}-(1) to obtain that $x_1+k_1=x_2$ and $x_1\le k_1-1$. As $x_2=k_2$, we deduce that $x_1=k_2-k_1$. It remains to show that for $x_1=k_2-k_1$, $x_1\le k_1-1$ holds. Observe that $x_1\le k_1-1$ if and only if $2k_1\ge k_2+1$. By \eqref{eq:kunz-cond} (or \eqref{eq:kunz-cond-idempotentes}) applied to $S$, we know that $2k_1\ge k_2$, so we need to ensure that $2k_1\neq k_2+1$. If $2k_1=k_2+1$, then $2f'=2(3(k_1-1)+1)=3(2k_1)-4=3(k_2+1)-4=3k_2-1=3(k_2-1)+2=f$, which contradicts the fact that $S$ is not pseudo-symmetric.
        
        This proves that the only ideals in $\Ni\setminus (\Ni[S']\cup \Ni[\overline{S}])$ are $S$ and $Q=(k_2-k_1,k_2)_{\mathcal{K}}$.

        \item \emph{The case $(f' \bmod 3, f \bmod 3)=(2,1)$}.  Set $k'=(f'-2)/3$ and $k=(f-1)/3$. As $f'<  f$, we deduce that $k'< k$. Also, $k'=k_2-1$ and $k=k_1-1$. 
        Let $(x_1,x_2)$ be the Kunz coordinates of an ideal $I\in \Ni\setminus (\Ni[S']\cup \Ni[\overline{S}])$. From $I\in \Ni$, by \eqref{eq:kunz-cond}, we obtain $x_1\le k_1$, and as $I\not\in \Ni[\overline{S}]$, by Lemma~\ref{lem:ideals-not-containing-f}, $f\not \in I$ and so $x_1\ge k+1=k_1$ by \eqref{eq:kunz-membership}. Thus, $x_1=k_1$. Also, by \eqref{eq:kunz-cond}, $x_2\le k_2$.

        If $f'\not\in I$, then by \eqref{eq:kunz-membership}, $k'=k_2-1<x_2$; whence, $k_2-1<x_2\le k_2$, which forces $x_2=k_2$, and consequently $I=S$. 

        If $f'\in I$, then by Lemma~\ref{lem:ideals-not-containing-f2}-(2) we deduce that $x_2+k_2+1=x_1$ and $x_2\le k_2-1$. The first equality yields $x_2=k_1-k_2-1$. Now, it remains to see that $k_1-k_2-1\le k_2-1$, or equivalently, $2k_2\ge k_1$. By \eqref{eq:kunz-cond-idempotentes} (or \eqref{eq:kunz-cond} applied to $(k_1,k_2)$), we know that $2k_2+1\ge k_1$, so we have to show that $2k_2+1=k_1$ cannot hold. If $k_1=2k_2+1$, then $2f'=2(3(k_2-1)+2)= 3(2k_2)-2=3(k_1-1)-2=f-3\not\in S$, which is impossible as we are assuming that $f'\in \operatorname{SG}(S)$ and so $2f'\in S$.
        
        Thus, the only ideals in $\Ni\setminus (\Ni[S']\cup \Ni[\overline{S}])$ are $S$ and $Q=(k_1,k_2-k_1-1)_{\mathcal{K}}$.         
    \end{itemize}
    Next, we prove that $Q$ is a quark in $\Ni$. We know that $\Ni=\Ni[S']\cup \Ni[\overline{S}]\cup \{S,Q\}$. Suppose on the contrary that there exists $I\neq S$ with $I\prec Q$. In particular, $I\neq Q$ and so either $I\in \Ni[S']$ or $I\in \Ni[\overline{S}]$. Recall that by Lemma~\ref{lem:oversemigroup-uparrow}, $\Ni[S']={\uparrow}S'$ and $\Ni[\overline{S}]={\uparrow}\overline{S}$. If $I\in \Ni[\overline{S}]$, then $S\preceq I\preceq Q$, and by Lemma~\ref{lem:oversemigroup-uparrow} we deduce that $Q\in \Ni[\overline{S}]$, a contradiction. Similarly, we obtain that $I\not\in \Ni[S']$, and this contradicts the fact that  either $I\in \Ni[S']$ or $I\in \Ni[\overline{S}]$.    

    Finally, observe that the case $k_1\le k_2$ corresponds with $(f'\bmod 3,f\bmod 3)=(1,2)$, while $k_1>k_2$ is equivalent to $(f'\bmod 3,f\bmod 3)=(2,1)$.
\end{proof}

Let $S$ be a numerical semigroup and let $I\in \Ni$. The depth of $I$ is the largest $k$ such that there exists a sequence $I_0,\dots, I_k\in \Ni$ with $I_0=I$, $I_k=\mathbb{N}$, $I_i\preceq I_{i+1}$, and $I_i\neq I_{i+1}$ for all $i\in\{0,\dots,k-1\}$. 

\begin{lemma}
    Let $S$ be a numerical semigroup and let $I=(x_1,x_2)_{\mathcal{K}}$ be an ideal of $S$. Then, the depth of $I$ is $x_1+x_2$.
\end{lemma}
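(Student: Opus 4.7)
I would prove both bounds on the depth of $I = (x_1, x_2)_{\mathcal{K}}$ by tracking the Kunz-coordinate sum.

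For the \emph{upper bound}, note that $I \preceq J$ implies $I \subseteq J$ (as recalled at the start of Section~3), so any strict chain $I = I_0 \prec I_1 \prec \cdots \prec I_k = \mathbb{N}$ is a chain of strict set inclusions. By \eqref{eq:genus-ideal-kunz}, the associated sequence $x_1(I_j) + x_2(I_j)$ is strictly decreasing, starts at $x_1 + x_2$, and ends at $0$; hence $k \leq x_1 + x_2$.

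For the \emph{lower bound}, I would induct on $x_1 + x_2$, with base case $I = \mathbb{N}$ being trivial. For the inductive step it is enough to exhibit some $J \in \Ni$ with $I \prec J$ whose Kunz coordinates sum to $x_1 + x_2 - 1$, and then splice with the chain from $J$ to $\mathbb{N}$ provided by the induction hypothesis. If $I \notin \Os$, then the unique cover $I^{c}$ supplied by Lemma~\ref{lem:m3-case-1} or Lemma~\ref{lem:m3-case-2} has exactly one Kunz coordinate decreased by $1$, so $J := I^{c}$ does the job.

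The main obstacle is the idempotent case $I \in \Os \setminus \{\mathbb{N}\}$, where no uniqueness of cover is available. My plan is to check that at least one of the candidates $J_1 = (x_1 - 1, x_2)_{\mathcal{K}}$ or $J_2 = (x_1, x_2 - 1)_{\mathcal{K}}$ is a valid normalized ideal of $S$ and satisfies $I + J_i = J_i$; by Lemma~\ref{lem:join-with-idempotent} the latter gives $I \preceq J_i$, and the strict inclusion (one less element in the Kunz sum) promotes this to $I \prec J_i$ with the required Kunz sum. Validity of $J_i$ reduces to inspecting the four inequalities in \eqref{eq:kunz-cond}, while $I + J_i = J_i$ is a direct computation from \eqref{eq:kunz-sum-ideals} using the idempotency bounds \eqref{eq:kunz-cond-idempotentes}. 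A short case analysis identifies the boundary situations in which $J_1$ fails, namely $x_1 = 0$, $x_2 = 2x_1$, or $x_1 + u = x_2$, and analogously the ones in which $J_2$ fails, namely $x_2 = 0$, $x_1 = 2x_2 + 1$, or $x_2 + v = x_1 - 1$. Combining these conditions pairwise, and using that $S = (u, v)_{\mathcal{K}}$ itself satisfies \eqref{eq:kunz-cond-idempotentes}, one checks that the only way for $J_1$ and $J_2$ to fail simultaneously is $I = \mathbb{N}$, which closes the induction.
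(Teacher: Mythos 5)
Your proof is correct, and its skeleton is the same as the paper's: the upper bound follows because a strict $\preceq$-chain is a strict inclusion chain, so \eqref{eq:genus-ideal-kunz} caps its length at $x_1+x_2$; and for the lower bound the non-idempotent case is settled exactly as in the paper, since by Lemmas~\ref{lem:m3-case-1} and~\ref{lem:m3-case-2} the unique cover of a non-idempotent ideal decrements precisely one Kunz coordinate. Where you genuinely diverge is the idempotent case. The paper sidesteps your case analysis by noting that an idempotent $I\neq\mathbb{N}$ is an oversemigroup of $S$, hence a numerical semigroup, and that $I\cup\{\operatorname{F}(I)\}=\{0,\operatorname{F}(I)\}+I$ satisfies $I+(I\cup\{\operatorname{F}(I)\})=I\cup\{\operatorname{F}(I)\}$ because $I+I=I$, so $I\prec I\cup\{\operatorname{F}(I)\}$; since every oversemigroup of $S$ other than $\mathbb{N}$ has Frobenius number not divisible by three, adjoining $\operatorname{F}(I)$ lowers exactly one Kunz coordinate by one. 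This gives a canonical successor with no boundary cases to rule out. Your route instead verifies directly that one of $(x_1-1,x_2)_{\mathcal{K}}$, $(x_1,x_2-1)_{\mathcal{K}}$ is a normalized ideal $J_i$ with $I+J_i=J_i$; your listed failure conditions are the right ones, and pairing them does show that simultaneous failure forces $x_1=x_2=0$ (the two cases that are not immediately contradictory, namely $x_2=2x_1$ with $x_2=0$ and $x_2=x_1+u$ with $x_2=0$, both collapse to $I=\mathbb{N}$). So your argument closes, but at the cost of a nine-case check that the Frobenius-number observation renders unnecessary; on the other hand, your version stays entirely inside Kunz-coordinate arithmetic and does not invoke any structure of oversemigroups beyond \eqref{eq:kunz-cond-idempotentes}.
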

\begin{proof}
  If $I$ is not idempotent, then by Proposition~\ref{prop:m3-non-idemp-one-cover}, we know that $I$ has a single cover, and it is either of the form $(x_1-1,x_2)_{\mathcal{K}}$ or of the form $(x_1,x_2-1)_{\mathcal{K}}$. Otherwise, $I$ is idempotent, and thus it is a numerical semigroup, and $I\cup\{\operatorname{F}(I)\}=\{0,\operatorname{F}(I)\}+I$ covers $I$. Every oversemigroup of $S$ has multiplicity three except $\langle 2,3\rangle$ and $\mathbb{N}$. Thus, in any case the Frobenius number of $I$ is not a multiple of three. Depending on the congruence class of $\operatorname{F}(I)$ modulo three, $I\cup\{\operatorname{F}(I)\}$ will be either $(x_1-1,x_2)_{\mathcal{K}}$ or $(x_1,x_2-1)_{\mathcal{K}}$. 

  In this way, we construct a sequence of ideals $I_0,\dots, I_k$, with $k=x_1+x_2$ such that $I_0=I$, $I_k=\mathbb{N}$ and $I_{i+1}$ covers $I_i$ for all $i$. 
  
  Observe also that if $I_0',\dots,I_t'$ is another sequence of ideals such that $I_0'=I$, $I_t'=\mathbb{N}$ and $I_{i+1}'$ covers $I_i$ for all $i$, then $I_i'\subsetneq I_{i+1}'$, which means that $|I_{i+1}'\setminus I_i'|\ge 1$. This forces $t$ to be upper bounded by $|\mathbb{N}\setminus I|$, which by \eqref{eq:kunz-coordinates-sum-general} equals $x_1+x_2$.
\end{proof}

If we apply this last result to $S$, we obtain that the depth of $S$ is $k_1+k_2$, recovering in this way \cite[Remark~5.1]{apery-icm}.

With all these ingredients at hand, we can now solve a particular instance of \cite[Question~6.2]{apery-icm}.

\begin{theorem}\label{thm:isom-poset-m3}
    Let $S$ and $T$ be two numerical semigroups, such that $\operatorname{m}(S)=3$. If $(\Ni,\preceq)$ and $(\Ni[T],\preceq)$ are order isomorphic, then $S=T$. 
\end{theorem}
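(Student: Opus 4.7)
My strategy is to reconstruct $S$ from three order-theoretic invariants of $(\Ni,\preceq)$: the genus $\operatorname{g}(S)$, the number of quarks, and the depths of those quarks. Together with \cite[Theorem~7]{multiplicity-3-4} (a multiplicity-three numerical semigroup is uniquely determined by the pair $(\operatorname{g}(S),\operatorname{F}(S))$), these invariants will force $T=S$.

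Fix the order isomorphism $\phi\colon(\Ni,\preceq)\to(\Ni[T],\preceq)$. The length of the longest ascending chain equals $\operatorname{g}(S)+1$ by \cite[Remark~5.1]{apery-icm}, so $\operatorname{g}(T)=\operatorname{g}(S)=:g$. Next I would establish that $\operatorname{m}(T)=3$: for $\operatorname{m}(T)\le 2$ the poset $(\Ni[T],\preceq)$ is a chain, whereas for $\operatorname{m}(S)=3$ a direct count gives $|\Ni|>g+1$ (the ideal $(1,0)_{\mathcal{K}}$ always lies off the chain $(0,0),(0,1),\ldots,(0,k_{2}),(1,k_{2}),\ldots,(k_{1},k_{2})$), so $(\Ni,\preceq)$ is not a chain. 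For $\operatorname{m}(T)\ge 4$, I would exploit the distinctive structural feature of multiplicity-three posets proved in Proposition~\ref{prop:m3-non-idemp-one-cover}---every non-idempotent ideal has a unique cover---since the example immediately following Theorem~\ref{th: m=3 is a lattice} shows that this property fails in higher multiplicity. Executing this uniform exclusion of all multiplicity-$\ge 4$ semigroups is the main obstacle of the proof.

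With $\operatorname{m}(T)=3$ in hand, the number of quarks $q$ is an order invariant. By \cite[Propositions~5.18 and 5.20]{apery-icm} together with Proposition~\ref{prop:mult-three-non-irreducible-three-quarks}, one has $q=1$, $q=2$, or $q=3$ precisely when the underlying semigroup is symmetric, pseudo-symmetric, or non-irreducible, respectively; hence $S$ and $T$ fall in the same class.

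Finally I would recover $\operatorname{F}(S)=\operatorname{F}(T)$ as follows: $\operatorname{F}=2g-1$ when $q=1$ and $\operatorname{F}=2g-2$ when $q=2$ (classical formulas for irreducible numerical semigroups); when $q=3$, a short computation from the Kunz-coordinate description in Proposition~\ref{prop:mult-three-non-irreducible-three-quarks} shows that two of the three quarks have depth $g-1$ while the third has strictly smaller depth $d_{3}$, yielding the uniform formula $\operatorname{F}=g+d_{3}-1$ in both sub-cases $k_{1}\le k_{2}$ and $k_{1}>k_{2}$. Since $(g,\operatorname{F})$ determines a multiplicity-three numerical semigroup, we conclude $S=T$.
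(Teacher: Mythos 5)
Your reconstruction of $S$ from order invariants of $(\Ni,\preceq)$ --- the genus from the longest ascending chain, the number of quarks to separate the symmetric, pseudo-symmetric and non-irreducible cases, and the depth $d_3$ of the exceptional quark in the non-irreducible case --- is essentially the paper's argument, and your uniform formula $\operatorname{F}(S)=g+d_3-1$ is correct in both sub-cases (the paper instead solves two linear systems for $(k_1,k_2)$, distinguishing them by $g+d_3\bmod 3$; the two formulations are equivalent). The exclusion of $\operatorname{m}(T)\le 2$ via the chain/non-chain dichotomy is also fine.

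There is, however, a genuine gap exactly where you flag ``the main obstacle'': you never establish $\operatorname{m}(T)=3$ when $\operatorname{m}(T)\ge 4$, and the theorem allows $T$ to be arbitrary. The paper disposes of this step in one line by invoking \cite[Proposition~5.2]{apery-icm}, which states that order-isomorphic posets of normalized ideals force equal multiplicities. Your proposed substitute --- transporting the property ``every non-idempotent ideal has a unique cover'' (Proposition~\ref{prop:m3-non-idemp-one-cover}) across the isomorphism --- is problematic on two counts. First, idempotency is defined through the addition on $\Ni[T]$, which is not part of the order structure preserved by the isomorphism, so ``non-idempotent ideals have unique covers'' is not a priori a property you can test on the abstract poset $(\Ni[T],\preceq)$; you would first have to characterize the idempotents order-theoretically. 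Second, even granting such a characterization, the paper only exhibits one multiplicity-four semigroup ($\langle 4,7,9,10\rangle$) where the unique-cover property fails; to exclude every $T$ with $\operatorname{m}(T)\ge 4$ you would need this failure (or some other order-theoretic obstruction) for all such $T$, which is neither proved nor obviously true. As written, your argument is complete only under the additional hypothesis $\operatorname{m}(T)=3$; the missing multiplicity-recognition step should be filled by citing \cite[Proposition~5.2]{apery-icm} or by an actual proof.
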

\begin{proof}
    By \cite[Proposition~5.2]{apery-icm}, $S$ and $T$ have the same multiplicitity. Also, by \cite[Theorem~5.21]{apery-icm}, $S$ is irreducible if and only if $T$ is irreducible.

    If $S$ is irreducible, then by the discussion at the beginning of this section, $S$ is completely determined by the height of $(\Ni,\preceq)$ (the depth of $S$) and by the number of quarks. Thus, in this case $S$ must be equal to $T$.

    Suppose now that $S$ is not irreducible. Let $(k_1,k_2)$ be the Kunz coordinates of $S$. Let $g$ be the genus of $S$. We know that $S$ has three quarks. By \eqref{eq:kunz-coordinates-sum-general}, $k_1+k_2=g$. By the previous lemma two of the quarks of $S$ have the same depth, and it is equal to $g-1$, while the third quark, $Q$, has depth equal to $2k_2-k_1$ or $2k_1-k_2-1$. Let us call this depth $d$. In the first case, $g+d=3k_2$ and so $g+d\equiv 0 \pmod 3$, while in the second case $g+d=3k_1-1$, which means that $g+d\equiv 2\pmod 3$.

   Notice that $k_1$ and $k_2$ are solutions to one of these systems of equations: 
    \begin{itemize}
        \item $k_1+k_2=g$, $2k_2-k_1=d$,
        \item $k_1+k_2=g$, $2k_1-k_2-1=d$.
    \end{itemize}
    In the first case ($g+d\equiv 0 \pmod 3$), we obtain $k_1=(2g-d)/3$ and $k_2=(g+d)/3$; while in the second ($g+d\equiv 2\pmod 3$), $k_1=(g+d+1)/3$ and $k_2=(2g-d-1)/3$. 
    
    This proves that $k_1$ and $k_2$ (and thus $S$) are uniquely determined by $g$ and $d$, and both $g$ and $d$ can be read from the Hasse diagram of $(\Ni,\preceq)$.
\end{proof}

\begin{remark}
    Let us prove that in the non-irreducible case, the third quark $Q$ has depth smaller than the genus minus one of $S$ (which is the depth of the other two quarks), that is, $d<g-1$, with the notation of the proof of Theorem~\ref{thm:isom-poset-m3}. Notice that $d\le g-1$, since the depth of $S$ is $g$ and $Q$ covers $S$. If $d=g-1$, then in the first case, $2k_2-k_1= k_1+k_2-1$, and so $2k_1=k_2+1$. Then, $2f'=2(3(k_1-1)+1)=3(2k_1)-4=3(k_2+1)-4=3k_2-1=3(k_2-1)+2=f$, which contradicts the fact that $S$ is not pseudo-symmetric (as in the proof of Proposition~\ref{prop:mult-three-non-irreducible-three-quarks}). 
    In the second case, $2k_1-k_2-1=k_1+k_2-1$, and so $k_1=2k_2$ and consequently $2f'=2(3(k_2-1)+2)=3k_1-2=3(k_1-1)+1=f$, which is impossible as we are assuming that $S$ is not pseudo-symmetric.
\end{remark}

\begin{example}
    Let $S=\langle 4, 9, 14, 19\rangle$. Then, $\Ni[S]$ has three quarks two of them are idempotents. Their Kunz coordinates are $(1,3,4)$, $(2,2,4)$, and $(2,3,3)$. Thus, all of them have depth eight. This means that for multiplicity four the strategy employed in this section for non-irreducible numerical semigroups is no longer valid.
\end{example}

\section*{Acknowledgments}
The research was carried out thanks to the hospitality offered by the Institute of Mathematics (IMAG) of the University of Granada and the staff exchange program ERASMUS+ funded by the European Community.

S. Bonzio acknowledges the support by the Italian Ministry of Education, University and Research through the PRIN 2022 project DeKLA (``Developing Kleene Logics and their Applications'', project code: 2022SM4XC8) and the PRIN Pnrr project ``Quantum Models for Logic, Computation and Natural Processes (Qm4Np)'' (cod. P2022A52CR). He also acknowledges the Fondazione di Sardegna for the support received by the projects GOACT (grant number
F75F21001210007) and MAPS (grant number F73C23001550007), the University of Cagliari for the support by the StartUp project ``GraphNet''. Finally, he gratefully acknowledges also the support of the INDAM GNSAGA (Gruppo Nazionale per le Strutture Algebriche, Geometriche e loro Applicazioni). \\
\noindent
P. A. García-Sánchez is partially supported by the grant number ProyExcel\_00868 (Proyecto de Excelencia de la Junta de Andalucía) and by the Junta de Andaluc\'ia Grant Number FQM--343. He also acknowledges financial support from the grant PID2022-138906NB-C21 funded by MICIU/AEI/10.13039/\bignumber{501100011033} and by ERDF ``A way of making Europe'', and from the Spanish Ministry of Science and Innovation (MICINN), through the ``Severo Ochoa and María de Maeztu Programme for Centres and Unities of Excellence'' (CEX2020-001105-M).

\end{document}